\newtheorem{Thm}{Theorem}
\newtheorem{Lem}{Lemma}
\theoremstyle{definition}
\theoremstyle{remark}
\newtheorem{Rem}{Remark}[section]
\newtheorem{Exe}{Example}[section]
\DeclareMathOperator*{\E}{\mathbb{E}}
\DeclareMathOperator*{\argmax}{argmax}
\title{Nonexistence of fractional Brownian fields\\ indexed by cylinders} 
\author{Nil Venet\footnote{Institut de Mathématiques de Toulouse, Université Paul Sabatier, 118 Route de Narbonne, 31400 Toulouse,  nil.venet@math.univ-toulouse.fr}}
\begin{document}	
	\maketitle
	\begin{abstract}
		We show in this paper that there exists no $H$-fractional Brownian field indexed by the cylinder $\mathbb{S}^1 \times ]0,\varepsilon[$ endowed with its product distance $d$ for any $\varepsilon>0$ and $H>0$. This is equivalent to say that $d^{2H}$ is not a negative definite kernel, which also leaves us without a proof that many classical stationary kernels, such that the Gaussian and exponential kernels, are positive definite kernels -- or covariances -- on the cylinder.
		
		We generalise this result from the cylinder to any Riemannian Cartesian product with a minimal closed geodesic. We also investigate the case of the cylinder endowed with a distance asymptotically close to the product distance in the neighbourhood of a circle. 
		
		Another consequence is the discontinuity of the set of $H$ such that $d^{2H}$ is negative definite with respect to the Gromov-Hausdorff convergence on compact metric spaces.
		
		These results extend our comprehension of kernel construction on metric spaces, and in particular call for alternatives to classical kernels to allow for Gaussian modelling and kernel method learning on cylinders.
	\end{abstract}
	\section{Introduction} \label{sec:introduction}
	The study of fractional random processes has been a very active topic since the article of Mandelbrot and Van Ness on fractional Brownian motion~\cite{MandelbrotVanNess}, from which they have proven to be major random models in a variety of applications. In order to model geological phenomena Mandelbrot considered in \cite{Mandelbrotfield}  fractional Brownian motion indexed by the Euclidean spaces. In \cite{istas2011manifold}, Istas stresses out the need for fractional random fields indexed by nonflat spaces and defines the $H$-fractional Brownian field indexed by any metric space. It is a natural generalisation of the classical fractional Brownian motion from which it inherits key properties such as stationary increments with respect to the isometry group of the index space, long range memory, and often local $H$-self-similarity.
	
	Alas, it does not always exist. Moreover, it is in general not easy to check if it does, as one needs to prove the positive definiteness of the corresponding covariance kernel. This question has been of interest earlier in some special cases: Lévy proved the existence of the Brownian field (corresponding to $H=1/2$) indexed by the Euclidean spaces and the spheres~\cite{levyprocessus,levy1959sphere}. He used direct geometrical constructions, generalised later by Chentsov, Morozova \cite{morozova68}, Lifshits \cite{LifshitsLevy} and Takenaka \cite{takenaka87}. Others authors have tackled this question with harmonic analysis : Molchan extensively studied the existence of the Brownian field indexed by symmetric spaces~\cite{molchan87a}. The works by Gangolli~\cite{gangolli67} on Lévy-Schoenberg kernels and by Faraut and Harzallah~\cite{faraut74} on Hilbertian distances are also strongly connected to the question.
	Authors on fractional Brownian fields indexed by Riemannian manifolds includes Istas \cite{istas2005spherical,istas2006karhunen,istas2011manifold}, Cohen and Lifshits \cite{cohen2012stationary}, who considered hyperbolic spaces and spheres.
	
	 The existence of the $H$-fractional Brownian field indexed by a metric space $(E,d)$  is equivalent to the negative definite property of the kernel $d^{2H}$, where $d$ is the distance on the index space $(E,d)$. Istas \cite{istas2011manifold} noticed that there exists a \emph{fractional index} $\beta_E>0$ depending on $(E,d)$ such that $d^{2H}$ is a negative definite kernel if and only if $2H\leq \beta_E$. It is clear from Faraut and Harzallah \cite{faraut74} that any Hilbert space and thus Euclidean spaces enjoy fractional indexes equal to~$2$, and Istas showed that this is the maximum value of $\beta_E$ for a Riemannian manifold (\cite{istas2011manifold}). However the spheres and the real hyperbolic spaces have fractional index $1$ (see Istas \cite{istas2011manifold}). As $\beta_E<2$ for any Riemannian manifold with at least one point of positive curvature (Istas \cite{istas2011manifold}) and $\beta_E<1$ for any ellipsoid which is not a sphere (Chentsov and Morozova \cite{morozova68}), existence 
	of fractional Brownian fields seems to be a rather fragile property. Results on fractional index related to curvature and topology are given in \cite{feragen} and \cite{Venet_critical}.
	
	Furthermore the negative definiteness of $d^{2H}$ on a metric space also gives the positive definiteness of the kernels of the form $F(d^{2H})$, where $F$ is a completely monotone function (see for instance \cite{Berg_al}). This method gives the existence of stationary Gaussian random fields indexed by the metric space. The associated kernels include the Gaussian and exponential kernels, and are crucial to allow for ``kernel method" machine learning of nonlinear data (see for example \cite{scholkopf2002learning}). \\

	We show in this paper that the cylinder $\mathbb{S}^1 \times ]0,\varepsilon[$ endowed with its canonical Riemannian product metric has fractional index~$0$. In other terms, for every positive $H$ and $\varepsilon$, $d^{2H}_{\mathbb{S}^1 \times ]0,\varepsilon[}$ is not a negative definite kernel, hence there exist no $H$-fractional Brownian field indexed by the cylinder (see Theorem \ref{Thm:cylinder}).
	
	We then generalise this result to the product of two Riemannian manifolds $M\times N$ endowed with the Riemannian product distance, as long as it contains a minimal closed geodesic (see Theorem \ref{Thm:extensionproducts}).
	
	It is remarkable that the cylinder exhibits an obstruction to the negative definiteness of $d^{2H}$ which relies entirely on its global structure:  indeed the cylinder and the Euclidean plane share the same local flat metric, but as one enjoys a negative definite kernel $d^{2H}$ for every $H\leq 1$, the other admits none. To the knowledge of the author the only other known examples of spaces with fractional index $0$ are a class of non Euclidean normed vectorial spaces (see \cite{Koldobskybook}) and the quaternionic hyperbolic space endowed with its geodesic distance (Faraut \cite{faraut74}).
	
	However this global characteristic of the result contrasts with a local aspect. The result is given for a cylinder as short as wanted: the fractional index of $\mathbb{S}^1\times ]0,\varepsilon[$  is zero for every positive $\varepsilon$. 
	We use this ``locallity around the circle" to investigate the case of metric spaces with a distance asymptotically close to the cylinder distance in the neighbourhood of a circle (see Section \ref{sec:perturbation}) and obtain upper bounds for the fractional index in this setting. In particular we look into the case of revolution surfaces and give an example with zero fractional index (see Theorem \ref{Thm:revolution}), which indicates that our argument does not depend on the product structure.
	
	This local feature of our result brings out a discontinuous behaviour of the fractional index, since $\mathbb{S}^1 \times [0,\varepsilon]$ has fractional index $0$ when $\varepsilon$ is positive and $1$ when $\varepsilon=0$. We show in particular that the fractional index is not continuous with respect to the Gromov-Hausdorff convergence of compact metric spaces (see Section \ref{sec:discontinuity}).

	The proof on the cylinder (Theorem \ref{Thm:cylinder}) is done through a direct method by exhibiting for every \mbox{$H<1/2$} a collection of points $(P^H_{i,n})_{i\leq n}$ and of coefficients $(c_i)$ such that \begin{equation} \label{eq:wanted} \lim\limits_{n \rightarrow \infty} \sum_{i,j=1}^n c_ic_j d^{2H}(P^H_{i,n},P^H_{j,n})=+ \infty,\end{equation} which by definition prevents the kernel $d^{2H}$ to be negative definite. We start by investigating a collection of points on the circle, which we afterwards duplicate on two horizontal circles of the cylinder. Finally we consider the same collection of points on a number of circles depending on $n$. The behaviour of  $\lim\limits_{n \rightarrow \infty} \sum_{i,j=1}^n c_ic_j d^{2H}(P^H_{i,n},P^H_{j,n})$ when $n \rightarrow \infty$ is governed by the asymptotic regime of the distance $z_n$ between two consecutive circles, which should be chosen carefully in order to obtain the desired divergence towards infinity.	This adequate regime depends on $H$. In particular every point converges towards the circle at height zero so that the proof works for every $\varepsilon$, but at a rate slow enough so that the quantity we consider in \eqref{eq:wanted} does not asymptotically behave as if the points were on a circle.  Our other results all rely on Theorem \ref{Thm:cylinder}.

	\subparagraph{Outline of the article} In Section \ref{sec:generalities} we recall some generalities and detail our motivations. In Section \ref{sec:Mainstatement} we give the main statement and its proof. In Section \ref{sec:products} we extend our result to Riemannian products. In Section \ref{sec:perturbation} we consider distances close to the product distance on the cylinder. In Section \ref{sec:discontinuity} we deduce that the fractional index is discontinuous with respect to the Gromov-Hausdorff convergence.
	
	\section{Generalities} \label{sec:generalities}
	
	 In this article we consider metric spaces $(E,d)$ and study the negative definiteness property for the functions $d^{2H}(x,y)$, where $H$ is a positive parameter.
	 
	 The metric spaces we consider are cylinders (Section \ref{sec:Mainstatement}) or are close to cylinders in various ways (product spaces with a minimal closed geodesic in Section \ref{sec:products}, spaces which are asymptotically close to a cylinder in Section \ref{sec:perturbation}).
	 
	  In practice, we are looking for the \emph{fractional index} $\beta_E$ of the metric space, which is defined as the supremum of the parameters $H$ such that $d^{2H}$ is negative definite. This index is of particular interest because the function $d^{2H}$ is negative definite if and only if
	 \begin{equation} 2H \leq \beta_E. \end{equation}	 
	 This problematic is motivated by existence problems for fractional Brownian fields and stationary random fields indexed by $(E,d)$, which depend on the negative definiteness of $d^{2H}$. This property also gives the positive definiteness of kernels that are crucial for machine learning of nonlinear data.
	
	In this section we recall some generalities and give details about these motivations.
	
	\paragraph{Positive and negative definite kernels} Given a set $S$, we say that a symmetric function $f:S\times S \rightarrow \mathbb{R}$ is a \emph{positive definite kernel} if for every $x_1,\cdots,x_n \in S$ and every $\lambda_1,\cdots,\lambda_n \in \mathbb{R}$,
		\begin{equation} \label{eq:positive_definite_kernel} \sum_{i,j=1}^n \lambda_i \lambda_j f(x_i,x_j) \geq 0. \end{equation}
		
		Positive definite kernels are the covariances of random fields indexed by $S$. In particular, there exists a centred Gaussian random field indexed by $S$ with covariance $f$ if and only if $f$ is a positive definite kernel (see for instance \cite{Lifshitsbook}). Furthermore they are a key ingredient to machine learning of nonlinear data, as the positive definiteness of $f$ is equivalent to the existence of an Hilbert space $\mathcal{H}$ and a map $\Phi : S \rightarrow \mathcal{H}$ (the ``feature map") such that
		\begin{equation} \label{eq:feature_map} f(x,y)=\langle \Phi(x),\Phi(y) \rangle_{\mathcal{H}}, \end{equation}
 		which guaranties that $f$ can play the role of a scalar product to allow for every linear machine learning method (see \cite{scholkopf2002learning}).
		
		Positive definite kernels are closely related to negative definite kernels (see for example \cite{Berg_al}): a symmetric function $f$ is said to be a \emph{negative definite kernel} if for every $x_1,\cdots,x_n \in S$ and every $c_1,\cdots,c_n \in \mathbb{R}$ such that $\sum_{i=1}^n c_i = 0$,
				\begin{equation} \label{eq:negative_definite_kernel} \sum_{i,j=1}^n c_i c_j f(x_i,x_j) \leq 0. \end{equation}
		
	\paragraph{Fractional Brownian fields}
		Given a metric space $(E,d)$ and $H>0$, we recall that an \emph{$H$-fractional Brownian field} indexed by $E$ is a centred, real-valued, Gaussian random field $(X_x)_{x\in E}$ such that
		\begin{equation} \label{eq:defgen} \forall x,y \in E, ~ \E \left( X_x-X_y\right)^2=\left[d(x,y)\right]^{2H}. \end{equation}
		
		This definition does not yield uniqueness (in law) of the field. Indeed for $N$ a centred Gaussian random variable, if $(X_t)$ is an $H$-fractional Brownian field indexed by $E$ then so is $(N+X_t)$. It is classical to define for any point $O \in E$ the \emph{$H$-fractional Brownian field with origin in $O$} by requiring also that $X_O$ be equal to $0$ almost surely. If it exists one can check that the covariance is then
		\begin{equation} \label{cov} \E(X_xX_y)=\frac{1}{2}\left( d^{2H}(O,x)+d^{2H}(O,y)-d^{2H}(x,y)\right),\end{equation}
		hence the uniqueness of the law of the field. Moreover the existence of the fractional Brownian field with origin in $O$ is equivalent to the positive definiteness of \eqref{cov}. A theorem of Schoenberg (see for example \cite{istas2011manifold}) proves that it is the case if and only if $d^{2H}$ is a negative definite kernel. Notice that this property does not depend on the origin $O$, and that any Gaussian field verifying \eqref{eq:defgen} is obtained by addition of a normal random variable to  an $H$-fractional with origin in an arbitrary $O \in E$: the negative definiteness of $d^{2H}$ is equivalent to the existence of every $H$-fractional Brownian field indexed by $(E,d)$.
		
		\begin{Rem} \label{Rem:stable} In \cite{istas2011manifold} Istas define an $\alpha$-stable $H$-fractional field indexed by a metric space. Unlike in the Gaussian case, positive definiteness of the covariance is not sufficient to guaranty the existence of this field, but it is still necessary that $d^{2H\alpha}$ be negative definite:e: studying the negative definiteness of the powers of $d$ is also a first step for fractional non Gaussian modelling.
		\end{Rem}
	
	\paragraph{Stationary kernels} Furthermore when $d^{2H}$ is negative definite, for every \emph{completely monotone function} $F: \mathbb{R}^+\rightarrow \mathbb{R}^+$,
	\begin{equation} \label{eq:statio_kernels} (x,y) \mapsto F\left(d^{2H}(x,y)\right)\end{equation}
	is a positive definite kernel (see for instance \cite{Berg_al}). Let us recall that a function $F$ is completely monotone if and only $(-1)^n F^{(n)}(t) \geq 0$ for every $t\in \mathbb{R}^+$ and $n\in \mathbb{N}$.
	Since the kernels in \eqref{eq:statio_kernels} depend only on the distance, they are the covariances of \emph{stationary} Gaussian random fields. These are first-choice random models for functions over $E$, whose random behaviour is homogeneous with respect to the geometry of $(E,d)$.
	
	Positive definite kernels that are functions of a distance are also of crucial importance in kernel machine learning, since by replacing a scalar product in learning methods a kernel plays the role of a ``proximity measure". Examples of completely monotone functions include $t\mapsto e^{-\lambda t}$ for every positive $\lambda$. In particular, when they exist $e^{-\lambda d(x,y)}$ and $e^{-\lambda d^2(x,y)}$ generalise the exponential and the Gaussian kernel families.
	
	\paragraph{Fractional index} It is a striking fact that for every metric space $(E,d)$ there exists $\beta_E$ in $[0,+\infty]
	$ such that for every positive $H$,  $d^{2H}$ is negative definite if and only if (see Istas \cite{istas2011manifold})
	
	\begin{equation}\label{eq:fractionalindex} 2H\leq \beta_E.\end{equation}
	
	The number $\beta_E$ is called the fractional index of $(E,d)$ and is in general not easy to compute.  Let us stress out some general facts which follow directly from the definition of $\beta_E$, and that we will use later.
	
	\begin{Rem} \label{Rem:subspace} Given a metric space $(E,d)$ and $F \subset E$, if we consider $F$ as a metric space endowed with the restriction $d_{|F}$ of the distance $d$ to $F$, we have $\beta_F \geq \beta_E$.
	\end{Rem}
	\begin{Rem} \label{Rem:homo} For a positive $\lambda$, multiplying the distance on $E$ by $\lambda$ does not change the fractional index $\beta_E$.
	\end{Rem}
	
    \paragraph{General assumptions on Riemannian manifolds} Most of the time we will consider as index space a Riemannian manifold $M$ endowed with the geodesic distance $d_M$ associated to its Riemannian metric $\langle ~,~ \rangle_M$. Following \cite{gallot} we consider only $C^\infty$, connected, and countable at infinity manifolds in this whole document. Furthermore we assume the manifolds to be connected and without boundary, with the notable exception of $\mathbb{S}^1 \times [0,\varepsilon]$ in the proof of Theorem \ref{Thm:Gromovdiscontinuity}.
	
	\begin{Rem} \label{Rem:submanifold} Given a Riemannian manifold $M$ and a submanifold $N$ of $M$, it is possible to consider the restriction $d_{M|N}$ of the geodesic distance $d_M$ to $N$. On the other hand, one can consider the Riemannian manifold $N$ endowed with the restriction $\langle ~,~ \rangle_{M|N}$ of the inner product of $M$ to $N$, which gives a geodesic distance $d_N$. In general those two distances are not equal, because the minimal geodesics in $M$ from points of $N$ take values in the whole of $M$. In particular it is not possible to deduce the value of $\beta_M$ from local aspects of $M$ only, in spite of Remark \ref{Rem:subspace}.\end{Rem}
	\paragraph{Minimal closed geodesics} Let us recall that a \emph{minimal closed geodesic} $\gamma$ in a Riemannian manifold $M$ is a closed curve with values in $M$ such that for every points $P,Q$ on $\gamma$ there exists a minimal geodesic joining $P$ to $Q$ that is included in $\gamma$. In this case the two distances $d_{M|\gamma}$ and $d_{\gamma}$ are equal, and $\gamma$ is isometric to a circle of length $L(\gamma)$. In particular for a Riemannian manifold with a minimal closed geodesic, $\beta_M \leq \beta_{\mathbb{S}^1}=1$ (see Remarks \ref{Rem:subspace} and \ref{Rem:homo}).

	\section{Main result} \label{sec:Mainstatement}
	
	In this section we consider the cylinder $\mathbb{S}^1\times \mathbb{R}$ endowed with its Riemannian product metric
	\begin{equation}\label{eq:cylinder_metric} \langle~,~ \rangle_{\mathbb{S}^1 \times \mathbb{R}}=d\theta^2+dz^2. \end{equation}
	The expression of the associated geodesic distance is
	\begin{equation} d_{\mathbb{S}^1 \times \mathbb{R}}((\theta_1,z_1),(\theta_2,z_2))=\left(d_{\mathbb{S}^1}(\theta_1,\theta_2)^2+|z_1-z_2|^2\right)^{1/2},\end{equation}
	where $d_{\mathbb{S}^1}$ is the geodesic distance on $\mathbb{S}^1$, given by
	\begin{equation} d_{\mathbb{S}^1}(\theta_1,\theta_2)=\min(|\theta_1-\theta_2|,2\pi-|\theta_1-\theta_2|).\end{equation}
	
	\begin{Rem} \label{Rem:coincides} In the cylinder the geodesics are given by arcs of helices. In particular all the geodesics in $\mathbb{S}^1 \times \mathbb{R}$ between points of $\mathbb{S}^1 \times ]0,\varepsilon[$ stay at all time in $\mathbb{S}^1 \times ]0,\varepsilon[$. As a consequence, the restriction of $d_{\mathbb{S}^1 \times \mathbb{R}}$ to $\mathbb{S}^1 \times ]0,\varepsilon[$ and the geodesic distance associated to the metric \eqref{eq:cylinder_metric} on $\mathbb{S}^1\times ]0,\varepsilon[$ coincide.\end{Rem}

	\begin{Thm} \label{Thm:cylinder}
		For every $\varepsilon>0$ and $H >0$, $d^{2H}_{\mathbb{S}^1\times ]0,\varepsilon[}$ is not negative definite, hence there exists no $H$-fractional Brownian field indexed by the cylinder $\mathbb{S}^1 \times ]0,\varepsilon[$. In other terms, $$\beta_{\mathbb{S}^1 \times ]0,\varepsilon[}=0.$$
	\end{Thm}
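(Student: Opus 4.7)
My approach, following the outline in the introduction, is to exhibit for every $H$ in some initial interval $(0,H_0)$ a sequence of finite weighted point configurations on $\mathbb{S}^1\times ]0,\varepsilon[$ whose associated bilinear form $\sum_{i,j}c_{i,n}c_{j,n}\,d^{2H}(P^H_{i,n},P^H_{j,n})$ diverges to $+\infty$, while the weights sum to zero. By the fractional index property $2H\leq \beta_E$, producing such a counterexample for arbitrarily small $H$ already forces $\beta_{\mathbb{S}^1\times ]0,\varepsilon[}=0$, so it suffices to work in the regime $H<1/2$, where the circle is itself ``safe'' ($d^{2H}_{\mathbb{S}^1}$ is negative definite).

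I would build the configuration in three stages. First, on a single horizontal circle---whose induced metric coincides with the intrinsic circle distance by Remark~\ref{Rem:coincides}---I place $2n$ equispaced angles $\theta_i=\pi i/n$ carrying the alternating signs $(-1)^i$. Since $\beta_{\mathbb{S}^1}=1>2H$, the bilinear form on a single circle is nonpositive, of an order in $n$ that I can compute explicitly via standard summation-by-parts. Second, I duplicate this angular family on two parallel circles at vertical distance $z$ with opposite signs; the global weights still sum to zero, and a direct expansion yields
\[
\sum_{i,j} c_{i}c_{j}\,d^{2H}(P_{i},P_{j}) \;=\; -2\sum_{i,j=1}^{2n}(-1)^{i+j}\bigl[(d_{\mathbb{S}^1}^2(\theta_i,\theta_j)+z^2)^H - d_{\mathbb{S}^1}^{2H}(\theta_i,\theta_j)\bigr].
\]
Writing the bracket as $z^{2H}\,h(v)$ with $v=d_{\mathbb{S}^1}/z$ and $h(v)=(v^2+1)^H-v^{2H}$ reduces everything to an alternating discrete sum of the smooth positive decreasing profile $h$ evaluated at the rescaled locations $v=\pi k/(nz)$. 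Third, I stack this device by placing the same angular family on $m_n$ parallel circles at heights $z_n,2z_n,\dots,m_n z_n$ inside $]0,\varepsilon[$, with circle-weights $\alpha_{k,n}$ (e.g.\ $(-1)^k$) satisfying $\sum_k \alpha_{k,n}=0$. The global bilinear form then splits as $\sum_{k,\ell}\alpha_{k,n}\alpha_{\ell,n}\,\Phi_n(|k-\ell|\,z_n)$, where $\Phi_n$ collects the one- and two-circle contributions already studied.

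The heart of the proof is then the asymptotic analysis of $\Phi_n$ and the choice of the coupled scales $z_n$ and $m_n$. The requirements are delicate and pull in opposite directions: $m_n z_n$ must tend to $0$ so that the configuration fits inside $]0,\varepsilon[$ for every $\varepsilon$ (``locality around the circle''), but $n z_n$ must tend to $+\infty$, so that the alternating sum of $h(\pi k/(n z_n))$ does not degenerate to its ``pure circle'' value---which is exactly zero, yielding no divergence. This is precisely the tension highlighted in the introduction: $z_n$ must shrink fast enough that the stack collapses toward a single circle, yet slowly enough to exploit the singular behavior of $(u^2+z^2)^H-u^{2H}$ near $u=0$. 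I expect the critical rate to be of the form $z_n\sim n^{-\alpha(H)}$ for an explicit exponent depending on $H$. The main obstacle is verifying that at this critical scale, the alternating stacked sum of $\Phi_n$ dominates in absolute value the negative $\Theta(n)$ baseline contributed by each individual circle and has the right sign to push the full bilinear form to $+\infty$. Once this asymptotic is pinned down, the conclusion $\beta_{\mathbb{S}^1\times ]0,\varepsilon[}=0$ follows from the fractional index property as explained in the first step.
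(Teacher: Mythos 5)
Your three-stage architecture (single circle, pair of circles, stack of circles) and the scale tension you identify ($m_n z_n\to 0$ so the stack fits in $]0,\varepsilon[$, while $n z_n\to\infty$ so the stack does not degenerate to a single circle) both match the paper's strategy. The genuine gap is your choice of signs. In stage two you take \emph{opposite} signs on the two circles, giving $-2\sum_{i,j}(-1)^{i+j}\bigl[(d_{\mathbb{S}^1}^2+z^2)^H-d_{\mathbb{S}^1}^{2H}\bigr]$, and in stage three you propose circle-weights $\alpha_k=(-1)^k$. Both go the wrong way. Write $A_N=\sum_{i,j}(-1)^{i+j}d_{\mathbb{S}^1}^{2H}$ for the within-circle sum and $B_N(z)=\sum_{i,j}(-1)^{i+j}(d_{\mathbb{S}^1}^2+z^2)^H$ for a cross-circle sum. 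For $H<1/2$ one shows $A_N\sim c\,N^{1-2H}$ with $c<0$, while in the transitional regime of $z$ one shows $B_N(z)\to B_0:=H/(2\cdot4^{H-1})>0$. Your two-circle form equals $2A_N-2B_N(z)=-2(B_N(z)-A_N)\to-\infty$: the wrong sign. In stage three the stacked form with your alternating weights is $mA_N+\sum_{k\ne l}(-1)^{k+l}B_N(|k-l|z)$; since $B_N(|k-l|z)\approx B_0$ throughout the admissible range, one has $\sum_{k\ne l}(-1)^{k+l}B_0=\bigl(\sum_k(-1)^k\bigr)^2 B_0-mB_0\approx-mB_0$, so the stack collapses to roughly $m(A_N-B_0)\to-\infty$ again.

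The fix, which is what the paper does, is to put \emph{the same} sign pattern $(-1)^i$ on every circle, i.e.\ constant circle-weights $\alpha_k\equiv1$. Your requirement $\sum_k\alpha_k=0$ is unnecessary---since $\sum_i c_i=0$ already, the product weights $c_i\alpha_k$ sum to zero for any $\alpha$---and it is exactly this superfluous constraint that pushed you to the self-defeating alternating choice. With $\alpha_k\equiv1$ and $m\sim N^{\beta}$ circles one gets $mA_N+\sum_{k\ne l}B_N(|k-l|z)\approx mA_N+m^2B_0$, and the positive $O(m^2)$ cross contribution overwhelms the negative $O(mN^{1-2H})$ diagonal contribution as soon as $\beta>1-2H$; this is precisely how the paper obtains $Q_N\to+\infty$. (The hard technical content that remains is the paper's Lemma that $B_N(z_N)\to B_0$ \emph{uniformly} over the range of admissible rates $z_N$, which is needed because the $m^2$ vertical gaps $|k-l|z_N$ sweep over a whole interval of scales; your proposal gestures at a single critical rate $z_n\sim n^{-\alpha(H)}$, but in fact a window of rates is both necessary and what the paper proves.) One further minor slip: the per-circle baseline is $\Theta(n^{1-2H})$, not $\Theta(n)$.
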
 
 \subsection{Proof of the main result}
 
 Let us give an outline of the proof of the theorem. To prove the result we exhibit for every $0<H<1/2$ a sequence of configurations $$((P^H_{1,n},\cdots,P^H_{n,n}),(c_1,\cdots,c_n))_{n\in\mathbb{N}}$$ such that
 $$\lim\limits_{n \rightarrow \infty} \sum_{i,j=1}^n c_ic_j d_{\mathbb{S}^1\times ]0,\varepsilon[}^{2H}(P^H_{i,n},P^H_{j,n})=+ \infty,$$
 which shows that $d_{\mathbb{S}^1\times ]0,\varepsilon[}^{2H}$ is not a negative definite kernel. Hence there exists no  $H$-fractional \linebreak Brownian field indexed by $\mathbb{S}^1 \times ]0,\varepsilon[$ for every $0<H<1/2$.
 To conclude for every $H>0$ we recall that if $d^{2H}$ is not negative definite then $d^{2H'}$ is not negative definite for every $H' \geq H$ (see \eqref{eq:fractionalindex}).
 
 We carry the proof with a cylinder of radius $\frac{1}{2\pi}$ in order to get parallel circles of perimeter $1$ and lighten the computations. Doing so only multiplies the distance $d_{\mathbb{S}^1 \times \mathbb{R}}$ by a positive constant, therefore the fractional index remains the same (see Remark \ref{Rem:homo}).
 
	In Section \ref{subsec:circle} we work on a sequence of configurations with points on one circle. Section \ref{subsec:duplicate} deals with the same sequence duplicated on two parallel circles of the cylinder. We finish the proof in Section \ref{subsec:prooftheorem} by considering the same sequence of configurations on a diverging number of parallel circles of the cylinder.
	\subsubsection{A configuration on the circle} \label{subsec:circle}

	Let us consider a circle $S$ of \mbox{perimeter $1$}, parametrised by arc length $s \in [0,1[$. In this chart we have an explicit formula for the geodesic distance, $$d_{S}(s,s')=\min(|s-s'|,1-|s-s'|).$$
	For every $N\in\mathbb{N}$ and $1 \leq i \leq 4N$  we define $$P_{i,N}:=\frac{i}{4N} \in S,$$ and the coefficients  
	$$c_i=(-1)^i.$$	 
	Notice that for every $N$ we have $$\sum_{i=1}^{4N} c_i =0,$$
	so that $((P_{1,N}, \cdots P_{4N,N}),(c_1,\cdots, c_{4N}))$ is a configuration of $4N$ points in $S$.
	
	We now deal with the asymptotic behaviour of 
	\begin{equation} \label{A_N} A_N := \sum_{i,j=1}^{4N} c_i c_j d_{S}^{2H}(P_{i,N},P_{j,N}).\end{equation}
	\begin{Lem} \label{Lemcercle}
		For every $H\in ]0,1/2[,$ 
		$$
		\label{eq:AN} A_N \underset{N \rightarrow \infty}{\sim} \frac{N^{1-2H}}{4^{2H-1}} \sum_{p=0}^{\infty} \left[(2p)^{2H}-2(2p+1)^{2H}+(2p+2)^{2H}\right].
		$$
	\end{Lem}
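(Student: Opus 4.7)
The plan is to exploit the cyclic symmetry of the configuration to collapse the double sum $A_N$ into a single sum indexed by $k=j-i$, and then to recognise that single sum as a partial sum of second differences of $x\mapsto x^{2H}$. Since $d_S(P_{i,N},P_{j,N})$ depends only on $k:=(j-i)\bmod 4N$, and since $c_ic_j=(-1)^{j-i}$ is also well defined $\bmod\,4N$ (because $4N$ is even), a change of variables yields
\[ A_N \;=\; 4N\sum_{k=0}^{4N-1}(-1)^k\left(\frac{\min(k,4N-k)}{4N}\right)^{2H} \;=\; (4N)^{1-2H}\,T_N, \]
where $T_N:=\sum_{k=0}^{4N-1}(-1)^k\min(k,4N-k)^{2H}$.

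Next I would split $T_N$ at $k=2N$ and apply the substitution $m=4N-k$ in the second half; using $(-1)^{4N-m}=(-1)^m$, both halves collapse into sums of $(-1)^k k^{2H}$, and after combining I obtain
\[ T_N \;=\; (2N)^{2H} + 2\sum_{k=1}^{2N-1}(-1)^k k^{2H}. \]
The key observation is that this expression telescopes to a partial sum of second differences: setting $b_p:=(2p)^{2H}-2(2p+1)^{2H}+(2p+2)^{2H}$, one verifies (using $0^{2H}=0$) that $\sum_{p=0}^{N-1}b_p$ rearranges term by term to the previous display, so $T_N=\sum_{p=0}^{N-1}b_p$.

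Finally, a Taylor expansion of $x\mapsto x^{2H}$ around $x=2p+1$ gives $b_p=2H(2H-1)(2p+1)^{2H-2}+O(p^{2H-4})$, hence $b_p=O(p^{2H-2})$ with $2H-2<-1$ under the hypothesis $H<1/2$. The series $L:=\sum_{p\geq 0}b_p$ therefore converges absolutely, and strict concavity of $x\mapsto x^{2H}$ on $(0,\infty)$ for $H\in(0,1/2)$ forces $b_p<0$ for every $p$, so $L<0$ and in particular $L\neq 0$. It follows that $T_N\to L$ and
\[ A_N \;=\; (4N)^{1-2H}T_N \;\sim\; \frac{N^{1-2H}}{4^{2H-1}}\,L, \]
which is the asserted equivalence.

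The only step I expect to require real care is the telescoping identity $T_N=\sum_{p=0}^{N-1}b_p$; the reduction to a single sum is routine translation invariance, and the convergence and nonvanishing of $L$ follow from a direct Taylor estimate combined with concavity.
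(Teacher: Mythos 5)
Your proof is correct and follows essentially the same route as the paper's: both collapse the double sum to $4N$ times a single alternating sum over the distance values, reorganise that sum into second differences $b_p=(2p)^{2H}-2(2p+1)^{2H}+(2p+2)^{2H}$, and conclude by the $O(p^{2H-2})$ estimate with $2H-2<-1$. Your framing via cyclic translation invariance and folding at $k=2N$ is a cleaner way to reach the same intermediate identity that the paper obtains by directly counting distance multiplicities, and your explicit observation that the limit $L$ is strictly negative (by concavity) makes the $\sim$ statement unambiguous, a point the paper defers to the remark following the lemma.
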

	\begin{proof}
		We write $P_i$ instead of $P_{i,N}$ when there is no ambiguity. The terms $d_{S}(P_{i},P_{j})$ appearing in the sum $A_N$ are of the form $\frac{k}{4N}$ for $k \in \{1, \cdots, 2N\}$. Each one appears $8N$ times except the term for $k=2N$. This last terms only appears $4N$ times corresponding to pairs of antipodal points.
		
		Moreover $c_i c_j$ depends only on $d_S(P_{i},P_{j})$, therefore
		
		\begin{alignat*}{1}
			A_N &=8N\sum_{k=1}^{2N-1} (-1)^k \left(\frac{k}{4N}\right)^{2H}+4N\left(\frac{1}{2}\right)^{2H}\\
			&=8N \left(\sum_{p=1}^{N-1} \left(\frac{2p}{4N}\right)^{2H}- \sum_{p=0}^{N-1}\left(\frac{2p+1}{4N}\right)^{2H}\right)+4N\left(\frac{1}{2}\right)^{2H}\\
			&=4N\left(\sum_{p=1}^{N-1}\left(\frac{2p}{4N}\right)^{2H}-2\sum_{p=0}^{N-1}\left(\frac{2p+1}{4N}\right)^{2H}+\sum_{p=0}^{N-2}\left(\frac{2p+2}{4N}\right)^{2H}\right)+4N\left(\frac{1}{2}\right)^{2H}\\
			&=4N \sum_{p=0}^{N-1} \left[\left(\frac{2p}{4N}\right)^{2H}-2\left(\frac{2p+1}{4N}\right)^{2H}+\left(\frac{2p+2}{4N}\right)^{2H}\right]\\
			&=\frac{4N^{1-2H}}{4^{2H}} \sum_{p=0}^{N-1} \left[(2p)^{2H}-2(2p+1)^{2H}+(2p+2)^{2H}\right].
		\end{alignat*}
		
		Because $$(2p)^{2H}-2(2p+1)^{2H}+(2p+2)^{2H}=O \left(\frac{1}{p^{2-2H}}\right)$$ and $H<1/2$, the series above converge and we get the result.
	\end{proof}
	\begin{Rem}For $H<1/2$ the sum of the series appearing in \eqref{eq:AN} is nonpositive by concavity of $x \mapsto x^{2H}$, hence $\lim\limits_{N\rightarrow \infty} A_N = -\infty$. Because $\beta_S=\beta_{\mathbb{S}^1}=1$, it is clear that no choice of configuration on the circle will give a positive result. It is then necessary to consider points at different heights on the cylinder in order to obtain our result. We start by duplicating our configuration on two circles.
	\end{Rem}
	\subsubsection{Duplicating the circle configuration} \label{subsec:duplicate}
	We now turn to the cylinder $S \times \mathbb{R}$, considering again a circle $S$ of perimeter $1$ parametrised by arc length. In the entire proof of Theorem \ref{Thm:cylinder} we denote by $d$ the geodesic distance $d_{S\times\mathbb{R}}$. Given two points $(s_1,z_1),(s_2,z_2) \in S\times \mathbb{R}$ we have
	$$ d((s_1,z_1),(s_2,z_2))=\left(d_{S}(s_1,s_2)^2+|z_1-z_2|^2\right)^{1/2}.$$
	
	Let us now consider a sequence of positive numbers $(z_N)_{N\in \mathbb{N}}$, and for every $N \in \mathbb{N}$,
	
	$$ P_{i,N}:= \left \{ \begin{array}{ll}\left(\frac{i}{4N},0\right) & \text{if }1 \leq i \leq 4N, \\  \left(\frac{i}{4N},z_N\right) & \text{if } 4N+1 \leq i \leq 8N. \end{array}\right.$$

	We set for every $1\leq i \leq 8N$ $$c_i=(-1)^i,$$ and notice again that $$ \forall N \in \mathbb{N}, ~ \sum_{i=1}^{8N}c_i=0,$$ so that $((P_{1,N}, \cdots P_{8N,N}),(c_1,\cdots, c_{8N}))$ is a configuration of $8N$ points in $S \times \mathbb{R}$.
	
	This time we deal with the asymptotic behaviour of 
	\begin{equation} C_N := \sum_{i,j=1}^{8N} c_i c_j d^{2H}(P_{i,N},P_{j,N}). \end{equation}
	We write again $P_i$ instead of $P_{i,N}$ when there is no ambiguity. Let us split
	
	\begin{multline*}
		C_N = \sum_{i,j=1}^{4N} (-1)^{i+j} [d(P_i,P_j)]^{2H}+\sum_{i,j=4N+1}^{8N} (-1)^{i+j} [d(P_i,P_j)]^{2H} \\ + \sum_{i=1}^{4N} \sum_{j=4N+1}^{8N} (-1)^{i+j} [d(P_i,P_j)]^{2H} + \sum_{i=4N+1}^{8N} \sum_{j=1}^{4N} (-1)^{i+j} [d(P_i,P_j)]^{2H}.
	\end{multline*}
	
	We now write $$ C_N=2A_N + 2B_N(z_N),$$ with $A_N$ as in \eqref{A_N} and 
	\begin{equation}B_N(z_N) := \sum_{i=1}^{4N} \sum_{j=4N+1}^{8N} (-1)^{i+j} [d(P_i,P_j)]^{2H}. \end{equation}								
	Since we know from Lemma \ref{Lemcercle} how $A_N$ behaves it remains to work on $B_N$ under proper assumptions on the regime $z_N$. Because $A_N$ is non positive, we aim to get a positive contribution from $B_N$. Asymptotic order of $B_N$ is also crucial in order to outweigh $A_N$, which we have proven to have asymptotic order $N^{1-2H}$. From our investigations it seems that
	\begin{itemize}
		\item if $z_N$ converges too quickly to zero $B_N$ tends to behave like $A_N$. In particular setting $$z_N=\frac{z_0}{N}$$ yields $$B_N \underset{N \rightarrow \infty}{\sim} C(z)N^{1-2H},$$ with $C(z_0)$ continuous in $z_0$. Since setting $z_0=0$ gives $B_N=A_N$, it is clear that $C(z_0)$ is non positive for small values of $z_0$, which is problematic because we aim at considering cylinders of the form $S \times ]0,\varepsilon[$ with $\varepsilon$ as small as desired.
		\item Choosing $z_N$ with slower regimes yields positive contribution from $B_N$ at the expense of a less important asymptotic order. In particular setting $$z_N=z_0>0$$ yields $$B_N \underset{N \rightarrow \infty}{\longrightarrow} \frac{H}{2}\left(\frac{1}{4}+z_0^2\right)^{H-1}$$ which is negligeable in front of $|A_N|$. 
	\end{itemize}
	
	We now give a class of regimes for $z_N$ under which $B_N(z_N)$ converges to a positive constant independent of $z_N$, with uniform speed in $z_N$.  We will later take advantage of this fact to consider an infinite number of circles and recover a dominant asymptotic order for $B_N(z_N)$.
	\begin{Lem}\label{lemdupli}
		Let us denote by $\mathcal{Z}_{\underline{\alpha},\overline{\alpha}}$ the set of all sequences of positive numbers $\left(z_N\right)_{N \geq 0}$ such that
		\begin{equation} \label{H_1} \tag{H1} z_N N^{\underline{\alpha}} \underset{N \rightarrow \infty}{\longrightarrow} 0 \end{equation}
		and
		\begin{equation} \label{H_2} \tag{H2} z_N N^{\overline{\alpha}} \underset{N \longrightarrow \infty}{\longrightarrow} \infty. \end{equation}
		For every $0<H<1/2$ and $\underline{\alpha},\overline{\alpha}$ such that $ 0 < \underline{\alpha} < \overline{\alpha} < 1$ we have
		
		$$\lim\limits_{N\rightarrow \infty} \sup\limits_{(z_N)_{N \geq 0} \in \mathcal{Z}_{\underline{\alpha},\overline{\alpha}}} \left| B_N(z_N)-\frac{H}{2\cdot 4^{H-1}}\right|=0.$$
	\end{Lem}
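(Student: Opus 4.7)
The strategy is to recognise $B_N(z_N)$ as a weighted integral of $\psi''$ with $\psi(y):=(y^2+z_N^2)^H$, isolate its limiting value, and control the remainder via oscillatory cancellation. This cancellation is genuinely needed because $\psi''$ develops, as $z_N\to 0$, a peak near $y=0$ of height $\sim z_N^{2H-2}$ carrying mass of order $z_N^{2H-1}\to\infty$.

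\textbf{Reduction and main term.} By the symmetry of the configuration, the pairs $(i,j)$ can be indexed by $k=\min(|i-j|,\,4N-|i-j|)\in\{0,\ldots,2N\}$, on which the sign $(-1)^{i+j}$ depends only. Grouping and telescoping exactly as in Lemma~\ref{Lemcercle} rewrites
\[
B_N(z_N)=4N\sum_{p=0}^{N-1}\left[\psi\!\left(\tfrac{p}{2N}\right)-2\psi\!\left(\tfrac{p}{2N}+\tfrac{1}{4N}\right)+\psi\!\left(\tfrac{p}{2N}+\tfrac{1}{2N}\right)\right].
\]
Expressing each second difference as $\int_0^{1/(4N)}\!\int_0^{1/(4N)}\psi''(\tfrac{p}{2N}+u+v)\,du\,dv$ and reassembling gives
\[
B_N(z_N)=\int_0^{1/2}K_N(y)\,\psi''(y)\,dy,
\]
where $K_N$ is the $\tfrac{1}{2N}$-periodic tent function valued $1$ at the midpoints $\tfrac{j}{2N}+\tfrac{1}{4N}$ and $0$ at the endpoints $\tfrac{j}{2N}$. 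Since $K_N$ has mean $\tfrac12$, write $K_N=\tfrac12+\widetilde K_N$ with $\widetilde K_N$ zero-mean periodic. The mean part supplies the main contribution
\[
\tfrac12\!\int_0^{1/2}\psi''(y)\,dy=\tfrac12\,\psi'(1/2)=\tfrac{H}{2\cdot 4^{H-1}}(1+4z_N^2)^{H-1},
\]
which tends to $\tfrac{H}{2\cdot 4^{H-1}}$. Hypothesis~\eqref{H_1} gives $z_N^2=o(N^{-2\underline\alpha})$, hence uniform convergence on $\mathcal Z_{\underline\alpha,\overline\alpha}$.

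\textbf{Error term by iterated integration by parts.} The residual $E_N:=\int_0^{1/2}\widetilde K_N(y)\,\psi''(y)\,dy$ is handled by the substitution $y=z_N u$: setting $g(u):=(u^2+1)^H$ and $\eta:=\tfrac{1}{2Nz_N}$,
\[
E_N=z_N^{2H-1}\int_0^{1/(2z_N)}\widetilde K(u/\eta)\,g''(u)\,du,
\]
with $\widetilde K$ the fixed $1$-periodic zero-mean tent, of Fourier coefficients $O(1/n^2)$. Performing $m$ integrations by parts on each harmonic $\int_0^L\cos(2\pi n u/\eta)\,g''(u)\,du$ (with $L=1/(2z_N)$), the boundary terms at $u=0$ all vanish, either because $\sin 0=0$ (after an odd number of IBPs) or because $g$ is an even function, so $g^{(2k+1)}(0)=0$ (after an even number of IBPs). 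Since $|g^{(k)}(u)|\lesssim u^{2H-k}$ at infinity and $g^{(k)}\in L^1([0,\infty))$ for every $k\ge 2$ (as $2H-k<-1$ for $H<\tfrac12$), a routine bookkeeping of the remaining boundary terms at $u=L$ and of the interior integral gives
\[
|E_N|\le C_m\!\left(\sum_{j=1}^{m}(z_N\eta)^{j}+z_N^{2H-1}\eta^{m}\right)\le C_m\!\left(\frac{1}{N}+\frac{1}{N^{m}\,z_N^{\,m+1-2H}}\right).
\]

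\textbf{Choice of $m$, and main obstacle.} Under~\eqref{H_2}, $z_N\ge c_N/N^{\overline\alpha}$ with $c_N\to\infty$, hence
\[
N^{m}z_N^{m+1-2H}\;\gtrsim\;c_N^{\,m+1-2H}\,N^{\,m(1-\overline\alpha)-\overline\alpha(1-2H)}.
\]
Since $\overline\alpha<1$, the exponent is strictly positive as soon as $m>\overline\alpha(1-2H)/(1-\overline\alpha)$. Fixing any such integer $m$ (depending only on $H$ and $\overline\alpha$) makes both terms in the bound on $|E_N|$ vanish uniformly on $\mathcal Z_{\underline\alpha,\overline\alpha}$, which combined with the main-term computation yields the lemma. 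The main obstacle is that no single integration by parts suffices to overcome the singular prefactor $z_N^{2H-1}$: the number of IBPs must be chosen as a function of the pair $(H,\overline\alpha)$, and the assumption $\overline\alpha<1$ is precisely what guarantees that a finite number of steps is enough.
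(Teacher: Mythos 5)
Your proof is correct, and it takes a genuinely different route from the paper's. The paper rescales to $\varphi(x)=(x^2+1)^H$, Taylor-expands the second difference to an arbitrary finite order, and then runs a recursive expansion (Lemmas~\ref{Lemdupli2}--\ref{Lemdupli4}) in the non-integer powers $N^{\theta_l}$; the resulting sequence of coefficients $C_{2l'+1}$ must all vanish, which the paper establishes through an explicit generating-function identity producing an even function of $z$. By contrast, you recognise $B_N(z_N)=\int_0^{1/2}K_N(y)\,\psi''(y)\,dy$ for a periodic tent kernel $K_N$ of period $1/(2N)$ (the convolution of two indicators coming from the double-increment representation of the second difference), and then split $K_N$ into its mean $\tfrac12$ plus the zero-mean oscillation $\widetilde K_N$. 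The mean part yields $\tfrac12\psi'(1/2)=\tfrac{H}{2\cdot4^{H-1}}(1+4z_N^2)^{H-1}$ directly, which makes the value of the limit transparent rather than emerging from a cancellation miracle. The oscillatory remainder is controlled via Fourier decomposition of the tent and iterated integration by parts after the rescaling $u=y/z_N$; the crucial observations are (i) $\omega L=2\pi nN$ is an integer multiple of $2\pi$, so odd-order boundary terms vanish, (ii) evenness of $g(u)=(u^2+1)^H$ kills the $u=0$ terms after even-order IBPs, and (iii) $g^{(k)}\in L^1$ for $k\ge 2$ and $H<\tfrac12$. Your bound $|E_N|\lesssim N^{-1}+N^{-m}z_N^{2H-1-m}$ then mirrors the paper's $O_u(N^{\theta_M})$ term, and the role of the hypothesis $\overline\alpha<1$ enters in exactly the same way: it lets a sufficiently large (but finite) $m$ defeat the singular prefactor $z_N^{2H-1}$.

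Two small remarks. First, the bound $\sum_{j=1}^m(z_N\eta)^j$ on the boundary terms is actually looser than what you prove: since the $j=1$ boundary term vanishes (as you yourself observe), the leading contribution is $j=2$, giving $O(N^{-2})$ rather than $O(N^{-1})$; this is immaterial for the conclusion. Second, the lemma's uniformity-over-$\mathcal Z_{\underline\alpha,\overline\alpha}$ statement is somewhat informal (for fixed $N$, the $N$-th term of a sequence in $\mathcal Z$ can be arbitrary), and both your argument and the paper's should really be read as saying that the implied constants in the error bounds depend only on $\underline\alpha,\overline\alpha,H$; you inherit exactly the paper's level of rigour here, which is what is actually used in Section~\ref{subsec:prooftheorem}.
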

	
	\subparagraph{Notations}
	
	We introduce some notations we use in the whole proof of Lemma \ref{lemdupli}. Let us write \begin{equation} \label{Notations}
		\left. \begin{array}{c}
			\alpha_N=\frac{-\ln(z_N)}{\ln(N)}, \\ \\ ~ \varphi : x \mapsto (x^2+1)^H, \\ \\ ~ x_p=\frac{2p+1}{4N^{1-\alpha_N}},\\ \\ ~ h=\frac{1}{4N^{1-\alpha_N}},  \\ \\ \theta_l = \alpha_N (l-1-2H)-l+2. \end{array} \right\} \end{equation}
	
	Because we aim for a result with uniformity in $z_N$, from now on we denote by
	\begin{itemize}
		\item $a(N,z_N)=O_u(b(N,z_N))$ the existence of $ C > O$ and $N_0$ such that for every $z_N \in \mathcal{Z}_{\underline{\alpha},\overline{\alpha}}$ and $N \geq N_0$, $|a(N,z_N)| \leq C |b(N,z_N)|$.
		\item In a similar way, $a(N,z_N)=o_u(b(N,z_N))$ means that $\forall \varepsilon>0, ~ \exists N_0$, $\forall z_N \in \mathcal{Z}_{\underline{\alpha},\overline{\alpha}}$, $|a(N,z_N)|\leq \varepsilon|b(N,z_N)|$.
	\end{itemize}
	
	To prove Lemma \ref{lemdupli} we proceed through Lemma \ref{Lemdupli2}, Lemma \ref{Lemdupli3}, and Lemma \ref{Lemdupli4} to a Taylor-like expansion of $B_N(z_N)$ on the powers $N^{\theta_l}$. Observe that for every $l$ we have $N^{\theta_{l+1}}=o\left( N^{\theta_l}\right).$ Indeed
	
	$$\frac{N^{\theta_{l+1}}}{N^{\theta_{l}}}=N^{\alpha_N-1}=\frac{1}{z_N N}=\frac{1}{z_N N^{\overline{\alpha}}} \times \frac{N^{\overline{\alpha}}}{N}$$
	converges towards zero when $N$ goes to infinity (use \eqref{H_2} and $\overline{\alpha}<1)$.
	
	Let us now give Lemma \ref{Lemdupli1} which we will use to estimate the asymptotic order of some remainders in the expansion.
	
	\begin{Lem} \label{Lemdupli1} With the notations from \eqref{Notations}, for every $H<1/2$, every integer $q \geq 2 $ and  $$ y_p =x_p+ h  \delta_{p,N}, $$ where $\delta_{p,N}$ is any double-indexed sequence with values in $\left[-1,1\right],$												
		$$\sum_{p=0}^{N-1}|\varphi^{(q)}(y_p)|=O_u\left(N^{1-\alpha_N}\right).$$
	\end{Lem}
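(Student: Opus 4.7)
The plan is to reduce the sum to a Riemann sum for an integrable tail of a polynomially decaying function. The key structural fact is that for $q\geq 2$ and $H<1/2$, the derivative $\varphi^{(q)}$ of $\varphi(x)=(1+x^{2})^{H}$ decays like $x^{2H-q}$, hence is integrable on $[0,\infty[$.

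First I would establish the pointwise bound
$$|\varphi^{(q)}(x)|\leq C_{H,q}(1+x)^{2H-q}\qquad(x\geq 0).$$
This follows by a short induction showing $\varphi^{(q)}(x)=P_{q}(x)(1+x^{2})^{H-q}$ for a polynomial $P_{q}$ of degree at most $q$; the induction step uses $P_{q+1}(x)=P_{q}'(x)(1+x^{2})+2(H-q)xP_{q}(x)$. The bound then comes from $|P_q(x)|\leq C(1+x)^{q}$ combined with $(1+x^{2})^{H-q}\leq (1+x)^{2H-2q}$.

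Next I would use that $y_{p}\in[x_{p}-h,x_{p}+h]$ gives the lower bound
$$y_{p}\;\geq\;x_{p}-h\;=\;\frac{2p}{4N^{1-\alpha_{N}}}\;=\;\frac{p}{2N^{1-\alpha_{N}}}.$$
Because $2H-q<0$, the function $t\mapsto(1+t)^{2H-q}$ is decreasing on $[0,\infty[$, so combining with the pointwise bound:
$$\sum_{p=0}^{N-1}|\varphi^{(q)}(y_{p})|\;\leq\;C_{H,q}\sum_{p=0}^{N-1}\Bigl(1+\tfrac{p}{2N^{1-\alpha_{N}}}\Bigr)^{2H-q}.$$
Writing $h'=1/(2N^{1-\alpha_{N}})$ and using decreasingness of $f(t)=(1+t)^{2H-q}$, each term with $p\geq 1$ is bounded by $\frac{1}{h'}\int_{(p-1)h'}^{ph'}f(t)\,dt$, so the sum is bounded by $1+\frac{1}{h'}\int_{0}^{\infty}(1+t)^{2H-q}\,dt$. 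The hypothesis $q\geq 2$ and $H<1/2$ give $q-2H-1>0$, so this integral equals $1/(q-2H-1)$, yielding
$$\sum_{p=0}^{N-1}|\varphi^{(q)}(y_{p})|\;\leq\;C_{H,q}\Bigl(1+\frac{2N^{1-\alpha_{N}}}{q-2H-1}\Bigr).$$

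Finally, I would check uniformity: the constants above depend only on $H$ and $q$, not on the sequence $(z_{N})$ nor on $\delta_{p,N}$. By (H2) and $\overline{\alpha}<1$, $N^{1-\alpha_{N}}=Nz_{N}\geq N^{1-\overline{\alpha}}z_{N}N^{\overline{\alpha}}$, which tends to infinity, so for $N$ large enough the term $1$ in the parenthesis is absorbed in $O_{u}(N^{1-\alpha_{N}})$. The main (and only nontrivial) obstacle is the derivative estimate in the first step; once it is in place, the rest is a standard decreasing-function Riemann-sum comparison together with the integrability criterion $q>2H+1$, which is exactly where the restriction $H<1/2$ enters. The hypotheses $\underline{\alpha}<\overline{\alpha}<1$ are used only to guarantee that the corrective $O(1)$ term is dominated by $N^{1-\alpha_{N}}$ uniformly in $(z_{N})\in\mathcal{Z}_{\underline{\alpha},\overline{\alpha}}$.
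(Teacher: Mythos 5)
Your proof is correct and reaches the same estimate, but the implementation differs from the paper's in a way worth noting. The paper only invokes the asymptotic $\varphi^{(q)}(t)\sim C\,t^{2H-q}$, which as an upper bound is useless near $t=0$; it therefore splits the sum at $p=\lfloor N^{1-\alpha_N}\rfloor$, controlling the first block by $\|\varphi^{(q)}\|_\infty$ and the tail by the decay bound together with the observation that $y_p/x_p=1+\delta_{p,N}/(2p+1)$ stays bounded and bounded away from $0$ for $p\geq 1$. You instead establish the global pointwise bound $|\varphi^{(q)}(x)|\leq C_{H,q}(1+x)^{2H-q}$ on all of $[0,\infty[$ via the induction $\varphi^{(q)}=P_q\cdot(1+x^2)^{H-q}$ with $\deg P_q\leq q$, which eliminates the splitting entirely and reduces everything to one monotone Riemann-sum comparison. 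That is somewhat cleaner; the price is the small induction that the paper sidesteps by quoting the asymptotic.

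One genuine (but easily repaired) slip: the stated inequality $(1+x^2)^{H-q}\leq(1+x)^{2H-2q}$ is reversed. Since $1+x^2\leq(1+x)^2$ and $H-q<0$, one in fact has $(1+x^2)^{H-q}\geq(1+x)^{2(H-q)}$. What you actually need, and what is true, is the constant version: for $x\geq 0$ the ratio $(1+x^2)/(1+x)^2$ lies in $[1/2,1]$, hence $(1+x^2)^{H-q}\leq 2^{\,q-H}(1+x)^{2(H-q)}$, and the factor $2^{q-H}$ is absorbed into $C_{H,q}$. With that correction the pointwise bound and the rest of the argument go through.

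Your final remark about uniformity is honest but, as stated, not fully rigorous: absorbing the additive $1$ into $O_u(N^{1-\alpha_N})$ requires $N^{1-\alpha_N}=Nz_N$ to be bounded below \emph{uniformly} over $(z_N)\in\mathcal{Z}_{\underline{\alpha},\overline{\alpha}}$ for $N\geq N_0$, and the tail conditions \eqref{H_1}--\eqref{H_2} alone do not furnish such a uniform $N_0$. This imprecision is, however, already present in the paper's proof (where $\lfloor N^{1-\alpha_N}\rfloor+1\leq C_4 N^{1-\alpha_N}$ is used in the same way), and it is harmless in the application, since there $z_N^{k,l}=|k-l|/N^\gamma$ gives $Nz_N^{k,l}\geq N^{1-\gamma}$ uniformly. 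So this is a shared feature, not a flaw you introduced.
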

	\begin{proof}
		Along the proof we use the positive constants $C_1,\cdots,C_6$. We claim that they exist and are independent of $N$ and the choice of $z_N \in \mathcal{Z}_{\underline{\alpha},\overline{\alpha}}$, though some may depend in $q$ and $H$ without altering the result. Let us notice that $$\varphi^{(q)}(t) \underset{t \rightarrow \infty}{\sim} C_1~ t^{2H-q},$$ which yields $$\varphi^{(q)}(t) \leq C_2 ~ t^{2H-q}.$$ We obtain
		\begin{alignat*}{1}
			\sum_{p=0}^{N-1}|\varphi^{(q)}(y_p)| &\leq   \sum_{p=0}^{\lfloor N^{1-\alpha_N}\rfloor} ||\varphi^{(q)}||_\infty + ~~ C_2 \! \! \! \! \!  \! \! \!\sum_{p={\lfloor N^{1-\alpha_N}\rfloor}+1}^{N-1} (y_p)^{2H-q},
		\end{alignat*}
		and $$(y_p)^{2H-q}=x_p^{2H-q}  \left( 1+ \frac{h \delta_{p,N} }{x_p} \right)^{2H-q}\leq C_3 ~ x_p^{2H-q}$$  because 
		$$\left( 1+ \frac{h \delta_{p,N}}{x_p} \right)=\left( 1+\frac{\delta_{p,N} }{2p+1} \right)$$  is bounded and away from $0$ as long as $p>0$.
		
		Finally
		\begin{alignat*}{1}
			\sum_{p=0}^{N-1}|\varphi^{(q)}(\delta_p)|&\leq (\lfloor N^{1-\alpha_N} \rfloor +1 ) ||\varphi^{(q)}||_\infty  + ~ C_2 C_3 \! \! \! \! \sum_{p={\lfloor N^{1-\alpha_N}\rfloor}+1}^{N-1} \left(\frac{2p+1}{4N^{1-\alpha_N}}\right)^{2H-q}\\
			&\leq C_4  N^{1-\alpha_N} + C_5 ~\frac{1}{\left(N^{1-\alpha_N}\right)^{2H-q}}  \sum_{p={\lfloor N^{1-\alpha_N}\rfloor}+1}^{N-1} p^{2H-q}\\
			&\leq C_4  N^{1-\alpha_N} + C_6 ~\frac{1}{\left(N^{1-\alpha_N}\right)^{2H-q}}  \left(N^{1-\alpha_N}\right)^{2H-q+1}\\
			&= O_u\left(N^{1-\alpha_N}\right). \qedhere
		\end{alignat*}
	\end{proof}
	\begin{Lem}\label{Lemdupli2} Under the assumptions \eqref{H_1}, \eqref{H_2} and with the notations \eqref{Notations} we have for every $M \geq 2$
		$$B_N(z_N)  =\sum_{n=2}^M b_n B_N^n+O_u\left(N^{\theta_{M+1}}\right), \label{firstexpansion}$$
		with
		\begin{equation}B_N^n  :=N^{\theta_n}\sum_{p=0}^{N-1} \frac{1}{2 N^{1-\alpha_N}} \varphi^{(n)}(x_p) \end{equation} and \begin{equation} b_n:=\frac{8}{n! 4^n}(1+(-1)^n).\end{equation}
	\end{Lem}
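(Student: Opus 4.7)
The plan is to reorganise $B_N(z_N)$ as a sum of symmetric second differences of $\varphi(x)=(x^2+1)^H$ evaluated at the points $x_p$, and then to Taylor-expand those second differences. The hypotheses \eqref{H_1}--\eqref{H_2} will enter only through the remainder estimate.

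First I would rewrite $B_N(z_N)$ in a ``radial'' form. Writing $j=4N+j'$ with $1\le j'\le 4N$ and $k'=4N\cdot d_S(P_i,P_{j'+4N})\in\{0,1,\ldots,2N\}$, an elementary count gives multiplicity $4N$ for $k'\in\{0,2N\}$ and $8N$ for $0<k'<2N$, and the sign $(-1)^{i+j}$ equals $(-1)^{k'}$ because $4N$ is even. Using $d(P_i,P_{j'+4N})^{2H}=z_N^{2H}\varphi\bigl(k'/(4Nz_N)\bigr)$ and the identifications $(2p+1)/(4Nz_N)=x_p$ and $2p/(4Nz_N)=x_p-h=x_{p-1}+h$, the weighted alternating sum rearranges exactly into
\begin{equation*}
B_N(z_N)=4Nz_N^{2H}\sum_{p=0}^{N-1}\bigl[\varphi(x_p-h)-2\varphi(x_p)+\varphi(x_p+h)\bigr].
\end{equation*}
The delicate point is that the half multiplicities at $k'=0$ and $k'=2N$ are precisely what is needed for the shift $\varphi(x_p-h)=\varphi(x_{p-1}+h)$ to yield this clean second-difference representation with no leftover boundary correction.

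Next I would apply Taylor's formula with Lagrange remainder at order $M$ to each second difference:
\begin{equation*}
\varphi(x_p+h)-2\varphi(x_p)+\varphi(x_p-h)=\sum_{n=2}^{M}\frac{(1+(-1)^n)h^n}{n!}\varphi^{(n)}(x_p)+\rho_{M,p},
\end{equation*}
where odd values of $n$ contribute zero automatically and $\rho_{M,p}$ involves $\varphi^{(M+1)}$ evaluated at points $\xi_p^{\pm}\in[x_p-h,x_p+h]$. Substituting $z_N=N^{-\alpha_N}$ and $h=(4N^{1-\alpha_N})^{-1}$ and matching exponents, one checks directly that
\begin{equation*}
\frac{4Nz_N^{2H}(1+(-1)^n)h^n}{n!}\sum_{p=0}^{N-1}\varphi^{(n)}(x_p)=b_nB_N^n,
\end{equation*}
so the main part of the expansion is exactly $\sum_{n=2}^{M}b_nB_N^n$.

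Finally, the remainder is $R_N=4Nz_N^{2H}\sum_p\rho_{M,p}$. Bounding $|\rho_{M,p}|\le\frac{2h^{M+1}}{(M+1)!}\max_{\pm}|\varphi^{(M+1)}(\xi_p^{\pm})|$ and invoking Lemma~\ref{Lemdupli1} (with $q=M+1\ge 3$ and $\delta_{p,N}\in[-1,1]$) gives $\sum_p|\varphi^{(M+1)}(\xi_p^{\pm})|=O_u(N^{1-\alpha_N})$; a direct computation of exponents, using $z_N^{2H}=N^{-2H\alpha_N}$, then produces $|R_N|=O_u(N^{\theta_{M+1}})$. The main obstacle of the proof is the combinatorial identity of the first step: once $B_N(z_N)$ has been rewritten as a clean sum of second differences, the rest is essentially mechanical Taylor and power-of-$N$ bookkeeping, but that rewriting requires careful handling of the multiplicities and signs arising from the circle geometry.
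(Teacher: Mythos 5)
Your proposal is correct and follows essentially the same route as the paper: reorganise $B_N(z_N)$ into a sum of symmetric second differences $\varphi(x_p-h)-2\varphi(x_p)+\varphi(x_p+h)$ with prefactor $4Nz_N^{2H}=4N^{1-2\alpha_N H}$, Taylor-expand to order $M$, match powers of $N$ to obtain $b_n B_N^n$, and control the Lagrange remainder via Lemma~\ref{Lemdupli1} applied with $q=M+1$. Your observation that the half-multiplicities at $k'=0$ and $k'=2N$ are exactly what makes the rearrangement boundary-free is a nice way of phrasing the same bookkeeping the paper carries out directly.
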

	\begin{proof} We start by reordering the terms in $B_N(z_N)$ in a similar way as we did for $A_N$ in the proof of Lemma \ref{Lemcercle}:
		
		\begin{alignat*}{1}
			& B_N(z_N)= 4N z_N^{2H} + 8N\sum_{k=1}^{2N-1} (-1)^k \left[\left(\frac{k}{4N}\right)^2+z_N^2\right]^{H} + 4N\left[\frac{1}{2^2}+z_N^2\right]^{H}\\
			& = 4N\sum_{p=0}^{N-1}\left(\left[\left(\frac{2p}{4N}\right)^2+z_N^2\right]^H-2 \left[\left(\frac{2p+1}{4N}\right)^2+z_N^2\right]^H+\left[\left(\frac{2p+2}{4N}\right)^2+z_N^2\right]^H\right)\\
			& = 4N \sum_{p=0}^{N-1}\left( \left[\left(\frac{2p}{4N}\right)^2+\frac{1}{N^{2\alpha_N}}\right]^H \! \! \! \! - 2 \left[\left(\frac{2p+1}{4N}\right)^2+\frac{1}{N^{2\alpha_N}}\right]^H \! \! \! \! + \left[\left(\frac{2p+2}{4N}\right)^2+\frac{1}{N^{2\alpha_N}}\right]^H  \right)  \\
			& =  \frac{4N}{N^{2\alpha_NH}} \sum_{p=0}^{N-1}\left( \left[\left(\frac{2p+1-1}{4N^{1-\alpha_N}}\right)^2 \! \! +1\right]^H \! \! \! \! - 2 \left[\left(\frac{2p+1}{4N^{1-\alpha_N}}\right)^2 \!  \! + 1\right]^H \! \! \! \!  + \left[\left(\frac{2p+1+1}{4N^{1-\alpha_N}}\right)^2 \! \! +1\right]^H  \right)\\
			& =  4N^{1-2\alpha_N H} \sum_{p=0}^{N-1} \left[ \varphi \left(x_p-h\right)-2\varphi \left(x_p\right)+\varphi \left(x_p+h\right) \right],
		\end{alignat*}
		
		Taylor expansions of $\varphi$ up to an arbitrary order $M$ give the following approximation of $B_N(z_N)$:
		\begin{alignat*}{1}
			& 4N^{1-2\alpha_N H} \sum_{p=0}^{N-1} \sum_{n=2}^M \left[\left(-h\right)^n\frac{\varphi^{(n)}(x_p)}{n!}+h^n\frac{\varphi^{(n)}(x_p)}{n!}\right]\\
			& = N^{1-2\alpha_N H} \sum_{p=0}^{N-1} \sum_{n=2}^M \frac{b_n}{2\left(N^{1-\alpha_N}\right)^n} \varphi^{(n)}(x_p) \\
			& =\sum_{n=2}^M b_n ~ N^{\theta_n}\sum_{p=0}^{N-1} \frac{1}{2 N^{1-\alpha_N}} \varphi^{(n)}(x_p), \text{ with the remainder}
		\end{alignat*}
		\begin{equation*}\label{R_{M+1}}R_{M+1}:= N^{1-2\alpha_N H} \sum_{p=0}^{N-1} \frac{C_M}{N^{(1-\alpha_N)(M+1)}} \left[\varphi^{(M+1)}(y_{p,1})+ (-1)^{(M+1)}\varphi^{(M+1)}(y_{p,2}) \right],
		\end{equation*}
		where $$y_{p,1} \in  ]x_p-h,x_p[$$ and  $$y_{p,2} \in ]x_p,x_p+h[ \ .$$
		
		Using Lemma \ref{Lemdupli1} with $y_{p}=y_{p,1}$ and again with $y_{p}=y_{p,2}$ shows that $$R_{M+1}= O_u \! \left(N^{\theta_{M+1}}\right) \ .\qedhere$$\end{proof}
	
	\begin{Lem} \label{Lemdupli3} Under the assumptions \eqref{H_1}, \eqref{H_2} and with the notations \eqref{Notations},
		for every $n \geq 3$ and $M\geq n:$
		\begin{equation}
			\label{BNn} B_N^n= \sum_{k=0}^{M-n} d_k N^{\theta_{n+k}} \varphi^{(n+k-1)}(0)+ \sum_{k=1}^{M-n} a_k ~ B_N^{n+k}+ O_u\left(N^{\theta_{M+1}}\right)+ o_u \left(1\right),
		\end{equation}
		while for every $M\geq2:$									\begin{equation}
			\label{BN2} B_N^2= \frac{H}{4^{H-1}} + \sum_{k=0}^{M-2} d_k N^{\theta_{2+k}} \varphi^{(2+k-1)}(0)+ \sum_{k=1}^{M-2} a_k ~ B_N^{2+k}+ O_u\left(N^{\theta_{M+1}}\right) + o_u \left(1\right),
		\end{equation}
		\begin{equation}\text{with~~~ }d_k:=-\frac{1}{4^k k!} \ , \end{equation} \begin{equation}a_k:=-\frac{1}{2^k (k+1)!}  \ .\end{equation}
	\end{Lem}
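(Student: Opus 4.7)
The plan is to derive an Euler--Maclaurin-type expansion of $B_N^n$ starting from the telescoping identity
\begin{equation*}
  \varphi^{(n-1)}(x_N) - \varphi^{(n-1)}(x_0) = \sum_{p=0}^{N-1} \bigl[\varphi^{(n-1)}(x_{p+1}) - \varphi^{(n-1)}(x_p)\bigr],
\end{equation*}
where I extend the mesh by $x_N := \frac{2N+1}{4N^{1-\alpha_N}}$. Taylor expanding each increment $\varphi^{(n-1)}(x_p + 2h) - \varphi^{(n-1)}(x_p)$ at $x_p$ up to order $M-n+1$ and using the identity $\sum_{p=0}^{N-1} \varphi^{(m)}(x_p) = (2h)^{-1} N^{-\theta_m} B_N^m$ turns the right-hand side into a linear combination of the $B_N^{n+k}$ for $k=0,\ldots,M-n$, plus a Lagrange remainder involving $\varphi^{(M+1)}$. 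The coefficients fall out of two algebraic cancellations, $(2h)^{k} N^{\theta_n - \theta_{n+k}} = 2^{-k}$ and $h^k N^{\theta_n} = 4^{-k} N^{\theta_{n+k}}$, which are just rewritings of $h = \tfrac{1}{4} N^{-(1-\alpha_N)}$ and $\theta_{n+k} - \theta_n = k(\alpha_N - 1)$: the $k=0$ term in the $B_N^{n+k}$ sum reproduces $B_N^n$ itself, and isolating it yields exactly the coefficients $a_k = -\frac{1}{2^k (k+1)!}$.

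Next, expanding the left boundary term $\varphi^{(n-1)}(x_0) = \varphi^{(n-1)}(h)$ as a Taylor polynomial at $0$ to order $M-n$ and multiplying by $-N^{\theta_n}$ produces $\sum_{k=0}^{M-n} d_k N^{\theta_{n+k}} \varphi^{(n+k-1)}(0)$ with $d_k = -\frac{1}{4^k k!}$, again by the second cancellation. The right boundary term $N^{\theta_n} \varphi^{(n-1)}(x_N)$ is the only place where $n=2$ differs from $n\geq 3$: using $x_N = \tfrac{1}{2} N^{\alpha_N}(1 + O(N^{-1}))$ and the asymptotics $\varphi^{(m)}(x) \sim c_{m,H}\, x^{2H-m}$ at infinity, a direct computation gives $O_u(N^{2-n}) = o_u(1)$ for $n \geq 3$, while a careful expansion of $\varphi'(x_N) = 2H x_N^{2H-1}(1 + x_N^{-2})^{H-1}$ gives $\frac{H}{4^{H-1}} + o_u(1)$ for $n=2$, accounting for the explicit constant in \eqref{BN2}. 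The two Taylor remainders (the Lagrange remainder from the increment expansion and the remainder in the expansion of $\varphi^{(n-1)}(h)$ at $0$) are both controlled using Lemma \ref{Lemdupli1} with $q = M+1$, and the same cancellations collapse them into $O_u(N^{\theta_{M+1}})$.

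The computations are essentially bookkeeping, and the main obstacle is aligning the powers of $N$ through the cancellations while checking that every $O$ and $o$ estimate is uniform in $z_N \in \mathcal{Z}_{\underline{\alpha}, \overline{\alpha}}$. Uniformity is automatic because $z_N$ enters the computation only through $\alpha_N = -\ln(z_N)/\ln(N)$, which by \eqref{H_1}--\eqref{H_2} eventually lies in any fixed sub-interval of $(\underline{\alpha}, \overline{\alpha}) \subset (0,1)$; all constants therefore arise as continuous functions of a parameter ranging in a compact set.
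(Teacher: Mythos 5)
Your proposal is essentially the paper's proof, presented in a dual formulation. The paper writes $B_N^n = N^{\theta_n}\sum_p\int_{x_p}^{x_{p+1}}\varphi^{(n)}(x_p)\,dt$ and replaces $\varphi^{(n)}(x_p)$ inside the integrand by $\varphi^{(n)}(t)$ minus the higher Taylor terms, so that the $k=0$ piece reassembles into $\int_{x_0}^{x_N}\varphi^{(n)}=\varphi^{(n-1)}(x_N)-\varphi^{(n-1)}(x_0)$; you instead begin with the telescoped difference $\varphi^{(n-1)}(x_N)-\varphi^{(n-1)}(x_0)$ and Taylor-expand each increment, then isolate the $k=0$ term. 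These are the same Euler--Maclaurin computation read in opposite directions, and they produce identical coefficients $a_k$, $d_k$ through the same two algebraic identities $h^kN^{\theta_n}=4^{-k}N^{\theta_{n+k}}$ and $(2h)^kN^{\theta_n-\theta_{n+k}}=2^{-k}$. Your treatment of the boundary terms (the $n=2$ versus $n\geq3$ dichotomy via $\theta_n+\alpha_N(2H-n+1)=2-n$, and the explicit constant $\frac{H}{4^{H-1}}$) also agrees with the paper's.

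One inaccuracy worth flagging: the claim that ``$\alpha_N$ eventually lies in any fixed sub-interval of $(\underline{\alpha},\overline{\alpha})$'' is false. For instance $z_N=N^{-\underline{\alpha}}/\log N$ satisfies \eqref{H_1}--\eqref{H_2} but has $\alpha_N\to\underline{\alpha}$, leaving every fixed compact sub-interval eventually, and in any case the $O_u$/$o_u$ definitions quantify over \emph{all} sequences simultaneously, so no compactness-of-parameter argument is available. Uniformity is not ``automatic'' in the sense you describe; it has to be carried explicitly through the remainder estimates, and this is exactly what Lemma~\ref{Lemdupli1} provides (its constants are shown to be independent of the choice of sequence in $\mathcal{Z}_{\underline{\alpha},\overline{\alpha}}$). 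Since your outline does invoke Lemma~\ref{Lemdupli1} for both remainders, the argument itself goes through; only the justification in the final paragraph is off. Also note that the remainder from expanding $\varphi^{(n-1)}(h)$ at $0$ is a single Lagrange term involving $\varphi^{(M)}(\xi)$ with $\xi\in(0,h)$, so it is bounded directly by $\|\varphi^{(M)}\|_\infty$ together with $h^{M-n+1}N^{\theta_n}=4^{-(M-n+1)}N^{\theta_{M+1}}$; Lemma~\ref{Lemdupli1} is only genuinely needed for the telescoping remainder, which is a sum over $p$.
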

	\begin{proof}Let us write \begin{align*}
			B_N^n  =N^{\theta_n}\sum_{p=0}^{N-1} \frac{1}{2 N^{1-\alpha_N}} \varphi^{(n)}(x_p) = N^{\theta_n} \sum_{p=0}^{N-1} \int_{x_p}^{x_{p+1}} \varphi^{(n)} (x_p) dt.
		\end{align*}
		Proceeding to a Taylor expansion up to the order $M-n$ of $\varphi^{(n)}(t)$ for any $t \in [x_p, x_{p+1}]$, we write, calling $R^n_{M+1}$ the remainder from the Taylor expansion:
		\begin{multline} \label{BN} B_N^n = N^{\theta_n} \left(\sum_{p=0}^{N-1} \int_{x_p}^{x_{p+1}} \left[\varphi^{(n)}(t) - \sum_{k=1}^{M-n} \frac{(t-x_p)^k}{k!}\varphi^{(n+k)}(x_p) \right] dt\right) + R^n_{M+1} \\
			=  N^{\theta_n} \left(\left[\varphi^{(n-1)}(t)\right]_{x_0}^{x_N} \! \!  - \! \! \sum_{k=1}^{M-n} \frac{1}{2^k (k+1)!} \cdot \frac{1}{N^{(1-\alpha_N)k}}\sum_{p=0}^{N-1} \frac{1}{2N^{1-\alpha_N}} \varphi^{(n+k)}(x_p)\right) \\ + R^n_{M+1} \ . \end{multline}
		For every $p$ and $t\in[x_p,x_{p+1}]$, there exists $y_p(t)$ in $]x_p, x_{p+1}[$ and continuous in $t$ such that 
		$$ R^n_{M+1} = N^{\theta_n} \left( - \sum_{p=0}^{N-1} \int_{x_p}^{x_{p+1}} \frac{(t-x_p)^{M-n+1}}{(M-n+1)!} \varphi^{(M+1)} (y_p(t)) dt \right) \ .$$
		
		We have
		\begin{alignat*}{1}
			\left|R^n_{M+1}\right| \leq ~ &   N^{\theta_n} \sum_{p=0}^{N-1} \max\limits_{t\in[x_p,x_{p+1}]} \left| \varphi^{(M+1)} (y_p(t))\right| \int_{x_p}^{x_{p+1}} \frac{(t-x_p)^{M-n+1}}{(M-n+1)!} dt\\
			=~ & N^{\theta_n} \sum_{p=0}^{N-1} \left| \varphi^{(M+1)} (y'_p)\right| ~ O_u\left(N^{(\alpha_N-1)(M-n+2)}\right)\\
			& \text{for some } y'_p \in \argmax\limits_{t\in[x_p,x_{p+1}]} \left| \varphi^{(M+1)} (y_p(t))\right|
		\end{alignat*}
		Using Lemma \ref{Lemdupli1} again we obtain \begin{equation} \label{reste} \left|R^n_{M+1}\right| = O_u\left(N^{\theta_{M+1}}\right). \end{equation}
		Coming back to \eqref{BN}, \begin{itemize} \item for $n=2$ it is easy to see that  \begin{alignat*}{1}\label{piece1}
				N^{\theta_n} \varphi^{(n-1)}(x_N) & = N^{\theta_2} \varphi'(x_N)= \frac{H}{4^{H-1}}+O_u \left( z_N^2\right) + o_u \left( 1 \right).\end{alignat*}
			Using \eqref{H_1} we obtain \begin{equation} \label{n3} N^{\theta_n} \varphi^{(n-1)}(x_N)=\frac{H}{4^{H-1}} + o_u(1) \end{equation} \item while for  $n \geq 3$ \begin{equation}\label{n2}N^{\theta_n} \varphi^{(n-1)}(x_N)=o_u \left(1\right).\end{equation}
		\end{itemize}
		In both cases, expanding $\varphi^{(n-1)}$ up to the order $M-n$ and using Lemma \ref{Lemdupli1} again to deal with the remainder we get
		\begin{equation}\label{piece3}-N^{\theta_n} \varphi^{(n-1)}(x_0)=\sum_{k=0}^{M-n} -\frac{N^{\theta_{n+k}}}{4^k k!} \varphi^{(n+k-1)}(0) + O_u \left(N^{\theta_{M+1}} \right).\end{equation}
		It remains in \eqref{BN} the term
		\begin{alignat}{1}\label{piece4}
			& \notag N^{\theta_N}\left(- \sum_{k=1}^{M-n} \frac{1}{2^k (k+1)!} \cdot \frac{1}{N^{(1-\alpha_N)k}}\sum_{p=0}^{N-1} \frac{1}{2N^{1-\alpha_N}} \varphi^{(n+k)}(x_p)\right)\\ = & \sum_{k=1}^{M-n} - \frac{1}{2^k (k+1)!} B_N^{n+k}.
		\end{alignat}
		Putting together all the pieces of \eqref{BN} from \eqref{reste}, \eqref{piece3}, \eqref{piece4}, and \eqref{n3} or \eqref{n2} whether $n=2$ or $n\geq 3$, we get the result.\end{proof}
	\begin{Lem}\label{Lemdupli4}Under the assumptions \eqref{H_1}, \eqref{H_2} and with the notations \eqref{Notations}, for every $M'\geq 2$ we have
		
		$$B_N(z_N)= \frac{H}{2 \cdot 4^{H-1}} + \sum_{l'=1}^{M'-1} C_{2l'+1} N^{\theta_{2l'+1}} \varphi^{2l'}(0)+ O_u \left( N^{\theta_{2M'+1}}\right)+ o_u\left(1\right), $$
		with
		\begin{equation} C_l:= \sum_{n'=1}^{\lfloor l/2 \rfloor} \sum_{k=0}^{l-2n'} b_{2n'} ~ A_{l-2n'-k} ~ d_k,\end{equation} where $ A_0:=1$ and for every $p\geq 1,$ \begin{equation} A_p:= \sum_{q=1}^{p} \!\!\!\!\!\! \sum_{\substack{~~~~m_1,\cdots,m_q >0 \\ ~~~~m_1+\cdots+m_q=p }} \! \! \!\!\!\!\!\!\!\!\!  a_{m_1}\cdots a_{m_q}.\end{equation}
	\end{Lem}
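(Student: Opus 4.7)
The plan is to cascade Lemma \ref{Lemdupli3} into the first expansion provided by Lemma \ref{Lemdupli2}, eliminating every $B_N^n$ in favour of ``boundary" contributions of the form $N^{\theta_l}\,\varphi^{(l-1)}(0)$, and then to identify the resulting coefficients as the $C_l$ of the statement.

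First I would apply Lemma \ref{Lemdupli2} at order $M = 2M'$, writing
$$B_N(z_N) \;=\; \sum_{n=2}^{2M'} b_n\, B_N^n \;+\; O_u\!\bigl(N^{\theta_{2M'+1}}\bigr).$$
Since $b_n = \frac{8}{n!\,4^n}(1+(-1)^n)$ vanishes for odd $n$, only the even indices $n = 2n'$ contribute. Moreover, among all the $B_N^n$ only $B_N^2$ carries a constant term, namely $\frac{H}{4^{H-1}}$ (by Lemma \ref{Lemdupli3}), so the constant in the final expansion is $b_2 \cdot \frac{H}{4^{H-1}} = \frac{H}{2\cdot 4^{H-1}}$, matching the announced value.

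Next I would iterate Lemma \ref{Lemdupli3} by substituting its right-hand side for every $B_N^{n+k}$ that appears, until no $B_N^m$ with $m \leq 2M'$ remains. If $c_{n,l}$ denotes the coefficient of $N^{\theta_l}\,\varphi^{(l-1)}(0)$ in the expansion of $B_N^n$, the recursion reads
$$c_{n,l} \;=\; d_{l-n} \;+\; \sum_{k\geq 1} a_k\, c_{n+k,\,l}, \qquad c_{n,l} = 0 \text{ for } l < n.$$
Unfolding this recursion step by step, each occurrence of a product $a_{m_1}\cdots a_{m_q}$ corresponds to a composition $m_1+\cdots+m_q = l-n-m$ into positive parts, which is precisely what $A_{l-n-m}$ enumerates; one thus gets $c_{n,l} = \sum_{m=0}^{l-n} A_{l-n-m}\,d_m$. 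Summing over $n = 2n'$ with weight $b_{2n'}$ yields
$$\sum_{n'=1}^{\lfloor l/2 \rfloor} \sum_{k=0}^{l-2n'} b_{2n'}\, A_{l-2n'-k}\, d_k \;=\; C_l$$
as the coefficient of $N^{\theta_l}\,\varphi^{(l-1)}(0)$ in $B_N(z_N)$.

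To conclude I would use that $\varphi: x \mapsto (x^2+1)^H$ is an even function, so $\varphi^{(l-1)}(0) = 0$ whenever $l-1$ is odd, i.e.\ whenever $l$ is even. Only the odd indices $l = 2l'+1$ with $l' \geq 1$ survive (the case $l \leq 2$ either violates $l \geq n \geq 2$ or gives $\varphi'(0)=0$), which is exactly the expansion stated in Lemma \ref{Lemdupli4}. The main obstacle will be keeping the $O_u$ and $o_u$ remainders under control uniformly in $z_N \in \mathcal{Z}_{\underline{\alpha},\overline{\alpha}}$: each application of Lemma \ref{Lemdupli3} contributes an $O_u(N^{\theta_{2M'+1}}) + o_u(1)$ error, so I would need to check both that the substitution process terminates in a bounded number of steps (it does, since the index $n$ strictly grows at each substitution and the recursion is applied only for $n \leq 2M'$) and that the finitely many accumulated error terms aggregate back into a single $O_u(N^{\theta_{2M'+1}}) + o_u(1)$ remainder, using the nesting $N^{\theta_{l+1}} = o(N^{\theta_l})$.
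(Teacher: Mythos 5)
Your proposal is correct and follows essentially the same route as the paper: feed Lemma \ref{Lemdupli2} into Lemma \ref{Lemdupli3} repeatedly, recognise that the only constant comes from the $B_N^2$ term with weight $b_2 = 1/2$, identify the aggregated coefficient of $N^{\theta_l}\varphi^{(l-1)}(0)$ as a sum over compositions giving $A_{l-n-k}$, and then kill the even $l$ via $\varphi$ even and the odd $n$ via $b_n=0$. The only cosmetic difference is that you phrase the unfolding as an explicit linear recursion $c_{n,l}=d_{l-n}+\sum_k a_k c_{n+k,l}$ and solve it, whereas the paper enumerates the products $b_n a_{m_1}\cdots a_{m_q} d_k$ directly; both give the same $C_l$.
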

	\begin{proof} 
		Using \eqref{BNn} and \eqref{BN2} from Lemma \ref{Lemdupli3} we get
		
		\begin{multline*}
			B_N(z_N)=  b_2  \Biggl(  \frac{H}{4^{H-1}}+ \sum_{k=0}^{M-2} d_k N^{\theta_{2+k}} \varphi^{(2+k-1)}(0) + \sum_{k=1}^{M-2} a_k ~ B_N^{2+k} \Biggr) \\ + \sum_{n=3}^M b_n \left( \sum_{k=0}^{M-n} d_k N^{\theta_{n+k}} \varphi^{(n+k-1)}(0)+ \sum_{k=1}^{M-n} a_k ~ B_N^{n+k}\right) + O_u\left(N^{\theta_{M+1}}\right)+o_u\left(1\right),
		\end{multline*}
		
		gathering terms and using $b_2=\frac{1}{2}$ we obtain
		
		\begin{multline*}
			B_N(z_N)= \frac{H}{2\cdot 4^{H-1}}  + \sum_{n=2}^M b_n \left(\sum_{k=0}^{M-n} d_k N^{\theta_{n+k}} \varphi^{(n+k-1)}(0)+ \sum_{m=1}^{M-n} a_m ~ B_N^{n+m} \right) \\+ O_u\left(N^{\theta_{M+1}}\right) +  o_u\left(1\right).
		\end{multline*}
		
		We now recursively apply \eqref{BNn} to obtain an explicit expansion of $B_N(z_N)$: all the asymptotic terms of the form  $O_u\left(N^{\theta_{M+1}}\right)$ and $o_u\left(1\right)$ gather because we only use \eqref{BNn}  a finite number of times. Apart from $ \frac{H}{2 \cdot 4^{H-1}}$, we only obtain terms of the form $C N^{\theta_l} \varphi^{l-1}(0)$. Furthermore :
		\begin{enumerate}
			\item if the term was obtained without using \eqref{BNn}, $C=b_nd_k$ for some $n$ and $k$ such that $n+k=l$,
			\item if the term was obtained after using \eqref{BNn} $q$ times, $C=b_n a_{m_1}\cdots a_{m_q}d_k$ with $n+m_1+ \cdots + m_q + k = l$.
		\end{enumerate}
		Hence the total constant before $N^{\theta_l} \varphi^{l-1}(0)$ equals
		$$\displaystyle C_l= \sum_{n=2}^l \sum_{k=0}^{l-n} b_n ~ A_{l-n-k} ~ d_k.$$ Let us notice that $b_n = 0$ for odd $n$ and $\varphi^{(l-1)}(0)=0$ for even $l$. We therefore write $n=2n'$, $l=2l'+1$ and $M'=\lceil M/2 \rceil$ and obtain the result.  \end{proof}

	\begin{proof}[Proof of Lemma \ref{lemdupli}] We will now show that all the coefficients $C_{2l'+1}$ in Lemma \ref{Lemdupli4} are vanishing. Let us write
		
		$$\displaystyle{C_{l}=\sum_{n'=1}^{\lfloor l/2 \rfloor} b_{2n'} Z_{l-2n'}}$$
		
		with
		
		$$\displaystyle{Z_r=\sum_{k=0}^r A_{r-k}d_k}$$
		
		for every $r \geq 1$.
		We are going to prove that $Z_r=0$ when $r$ is odd, which implies that $C_l=0$ when $l$ is odd. We do so by finding a formal power series associated to $(Z_r)_{r \geq 1}$ and showing that it converges to an even function.
		
		\begin{alignat*}{1}
			Z_r= \sum_{k=0}^{r-1} A_{r-k}d_k+A_0d_r=\sum_{k=0}^{r-1}\left( \sum_{q=1}^{r-k} \!\!\!\!\!\! \sum_{\substack{~~~~m_1,\cdots,m_q >0 \\ ~~~~m_1+\cdots+m_q=r-k }} \! \! \!\!\!\!\!\!\!\!\!  a_{m_1}\cdots a_{m_q} \right)\cdot d_k+d_r,
		\end{alignat*}
		
		then we can write the formal expansion
		$$ \sum_{r=1}^\infty Z_r z^r = \left(\sum_{q=1}^\infty \left(\sum_{n=1}^\infty a_n z^n\right)^{\! \! \! q}\right)\cdot \left( \sum_{k=0}^\infty d_k z^k\right)+\sum_{r=1}^\infty d_r z^r. $$
		It is easy to see that all series on the right side of the equality converges for $z$ small enough and to compute explicitly
		$$ \sum_{r=1}^\infty Z_r z^r= \frac{z}{2 ~ \left(e^{-z/4}-e^{z/4}\right)}+1$$
		which is an even function of $z$.
		
		Since all the coefficients in the expansion given in \mbox{Lemma \ref{Lemdupli4}} are equal to $0$ we get that
		$$ \forall M \in \mathbb{N}, ~~~ B_N(z_N)= \frac{H}{2\cdot4^{H-1}} + O_u\left(N^{\theta_M}\right)+ o_u(1).$$
		Let us now write $$O_u \left(N^{\theta_M} \right) =O_u \left(N^{\alpha_N(M-1-2H)-M+2}\right) =O _u \left(z_N^{2H+1-M}N^{-M+2}\right).$$
		From \eqref{H_2} we have $$z_N^{-1}=o_u(N^{\overline{\alpha}}),$$ therefore if we choose $M$ large enough we have $$O_u\left(N^{\theta_M}\right)=o_u\left( N^{\overline{\alpha}(M-2H-1)-M+2} \right)=o_u(1).$$ Finally $$B_N(z_N)= \frac{H}{2\cdot4^{H-1}} + o_u\left(1\right) $$ and Lemma \ref{lemdupli} is proven.
	\end{proof}
	\subsubsection{Final steps of the proof} \label{subsec:prooftheorem}
	\begin{proof}[Proof of Theorem \ref{Thm:cylinder}] In the cylinder $S \times \mathbb{R}$ we now consider a number of parallel circles depending on N. Each circle bear again the same configuration of points. Precisely, we choose $$0<\beta< \gamma <1$$ and take $\lfloor N^\beta\rfloor $ circles at the heights $$\frac{k}{N^\gamma}, ~ k\in \{1,\cdots, \lfloor N^\beta\rfloor \} \ . $$ We put on the $k$-th of these circles $4N$ points $(P^k_i)_{i=1}^{4N}$ of coordinates $$\left(\frac{i}{4N},\frac{k}{N^\gamma} \right)_{i=1}^{4N} \ . $$ We associate to those points the usual coefficients $$c^k_i=(-1)^i$$ and consider
		
		\begin{alignat*}{1}
			Q_N=&\sum_{k,l=1}^{\lfloor N^\beta \rfloor} \sum_{i,j=1}^{4N}  c_i c_j ~ d^{2H}(P^k_i,P^l_j)\\
			=&\sum_{k=1}^{\lfloor N^\beta \rfloor}\sum_{i,j=1}^{4N} c_i c_j d^{2H}(P^k_i,P^k_j)+ \! \! \! \!\sum_{k,l=1,  k \neq l}^{\lfloor N^\beta \rfloor}\! \! \sum_{~~i,j=1}^{4N}  c_i c_j  d^{2H}(P^k_i,P^l_j)\\
			=& \lfloor N^\beta \rfloor A_N + \sum_{k,l=1,  k \neq l}^{\lfloor N^\beta \rfloor} B_N\left(z^{k,l}_N\right),
		\end{alignat*}
		
		with $$z^{k,l}_N= \frac{|k-l|}{N^{\gamma}}.$$ Let us observe that all the $z^{k,l}_N$ verify $$\frac{1}{N^{\gamma}}\leq z^{k,l}_N\leq \frac{\lfloor  N ^{\beta}\rfloor}{N^{\gamma}}$$ and recall that  $0<\beta< \gamma <1$, hence we can apply Lemma \ref{lemdupli}, since all $z^{k,l}_N$ verify  \eqref{H_1} together with \eqref{H_2} as long as we choose $\underline{\alpha},\overline{\alpha}$ such that $$0<\underline{\alpha}<\gamma<1$$ and $$0<\gamma-\beta<\overline{\alpha}<1 \ , $$ which is always possible.
		
		In the end we get that
		\begin{alignat*}{1}
			Q_N=& \lfloor N^\beta \rfloor A_N + \frac{\lfloor N^\beta \rfloor  \left( \lfloor N^\beta \rfloor -1 \right)}{2}\left(\frac{H}{2\cdot 4^{H-1}} + o(1)\right).
		\end{alignat*} Recall from Lemma \ref{Lemcercle} that $$  A_N \underset{N \rightarrow \infty}{\sim} \frac{N^{1-2H}}{4^{2H-1}} \sum_{p=0}^{\infty} \left[(2p)^{2H}-2(2p+1)^{2H}+(2p+2)^{2H}\right],$$
		\\ therefore if we choose $\beta > 1-2H$ we obtain
		\begin{equation}
			\label{aswewanted} Q_N \underset{N \rightarrow \infty}{\sim} \frac{H}{4^{H}} \cdot  N^{2\beta} \underset{N \rightarrow \infty}{\longrightarrow} + \infty \text{ as we wanted.}
		\end{equation}
		Let us remark that for every positive $\varepsilon$ the points $P_{i,N}$ belongs to $S \times ]0,\varepsilon[$ for $N$ large enough: Theorem \ref{Thm:cylinder} is proven.
	\end{proof}

	\section{Extension of the result to Riemannian products}  \label{sec:products}	\sectionmark{Extension to Riemannian products}
	Let us recall some facts about Riemannian products. Given two differential manifolds $M$ and $N$, the Cartesian product $M \times N$ has a natural structure of differential manifold. Furthermore for every $(p,q)$ in  $M \times N$, \begin{equation}T_{(p,q)}(M \times N)=T_p M  \times T_q N.\end{equation} For every $u\in T_{(p,q)}(M\times N)$ we will write $u=(u_M,u_N)$.
	The Riemannian product of two Riemannian manifolds $M$ and $N$ is the manifold $M \times N$ endowed with the product Riemannian metric, given for every $u,v\in T_{(p,q)}(M\times N)$ by \begin{equation}\langle u,v  \rangle_{M\times N}= \langle u_M, v_M \rangle_M + \langle u_N, v_N \rangle_N.\end{equation}
	The geodesics in the Riemannian product $M\times N$ are exactly the curves $g(t)=(m(t),n(t))$ with $m$ and $n$ geodesics in $M$ and $N$. The same is true for minimal geodesics. As a consequence we have the following equality between the geodesics distances :
	
	\begin{equation} \label{eq:geodesicdistanceproduct} d_{M\times N} ((p_1,q_1),(p_2,q_2))= \sqrt{d_M(p_1,p_2)^2+d_N(q_1,q_2)^2}.\end{equation}
		
	\begin{Thm} \label{Thm:extensionproducts}
		For every Riemannian manifolds $M$ and $N$ such that $M$ contains a minimal closed geodesic, the Riemannian product $M \times N$ has fractional index
		$$ \beta_{M\times N} =0.$$
	\end{Thm}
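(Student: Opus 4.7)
The plan is to exhibit inside $M\times N$ a metric subspace which is isometric (up to rescaling) to a cylinder of the form $\mathbb{S}^1 \times ]0,\varepsilon[$ endowed with its product distance, and then to conclude by Remark \ref{Rem:subspace} and Theorem \ref{Thm:cylinder}. Recall that Remark \ref{Rem:subspace} yields $\beta_F \geq \beta_E$ for $F \subset E$, so producing such a subspace $F$ with $\beta_F = 0$ forces $\beta_{M\times N} = 0$.

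First I would fix a minimal closed geodesic $\gamma$ in $M$. By the discussion preceding Section \ref{sec:Mainstatement}, $\gamma$ is isometric to a circle of length $L := L(\gamma)$, and the restriction $d_{M|\gamma}$ coincides with the intrinsic circle distance $d_\gamma$. Next I would pick any $q_0 \in N$ and, using the general existence of geodesics on a Riemannian manifold, a geodesic $\eta : ]{-\delta},\delta[ \to N$ with $\eta(0) = q_0$ parametrised by arc length. For $\varepsilon > 0$ small enough (smaller than the injectivity radius of $N$ at $q_0$, say), the restriction of $\eta$ to any interval $]a, a+\varepsilon[ \subset ]{-\delta},\delta[$ is a minimising geodesic between any two of its points, so the restricted distance $d_{N|\eta(]a,a+\varepsilon[)}$ equals the arc-length distance $|t_1 - t_2|$.

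The key verification is then that the subset
\begin{equation*}
F := \gamma \times \eta(]a, a+\varepsilon[) \subset M \times N,
\end{equation*}
endowed with the restriction of $d_{M\times N}$, is isometric to the cylinder $C := \mathbb{S}^1_{L/2\pi} \times ]0,\varepsilon[$ endowed with its own product distance. Indeed for $(p_1,q_1), (p_2,q_2) \in F$, formula \eqref{eq:geodesicdistanceproduct} together with the two identities $d_M(p_1,p_2) = d_\gamma(p_1,p_2)$ and $d_N(q_1,q_2) = |t_1 - t_2|$ (with $q_i = \eta(t_i)$) gives exactly the product-distance expression on the cylinder. The map $(p, \eta(t)) \mapsto (p, t-a)$ is then a bijective isometry from $(F, d_{M\times N|F})$ onto $(C, d_C)$.

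I expect this isometry verification to be the only delicate point, and it is straightforward once one observes that the minimising geodesics from $\gamma$ in $M$ need not leave $\gamma$ (by minimality of the closed geodesic) and that a short enough $\eta$ is itself minimising in $N$; otherwise the product distances in $M\times N$ and in a cylinder would not match (compare Remark \ref{Rem:submanifold}). Once the isometry is established, Remark \ref{Rem:homo} gives $\beta_C = \beta_{\mathbb{S}^1 \times ]0,\varepsilon[}$, Theorem \ref{Thm:cylinder} yields $\beta_C = 0$, hence $\beta_F = 0$, and Remark \ref{Rem:subspace} concludes $\beta_{M\times N} \leq 0$, so $\beta_{M\times N} = 0$.
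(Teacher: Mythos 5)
Your proposal is correct and follows essentially the same route as the paper: embed $\gamma \times (\text{a short minimal geodesic segment of }N)$ isometrically as a cylinder inside $M\times N$ via the product distance formula \eqref{eq:geodesicdistanceproduct}, then invoke Theorem \ref{Thm:cylinder} together with Remarks \ref{Rem:homo} and \ref{Rem:subspace}. The only (minor) difference is that you justify the existence of a minimal geodesic segment in $N$ via the injectivity radius at a chosen point, whereas the paper simply takes ``any minimal geodesic in $N$'' as given; this is a harmless clarification, not a different argument.
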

	\begin{proof} Let us consider $$\gamma : [0,2 \pi ]\rightarrow M$$ a minimal closed geodesic and $$g : [0,T] \rightarrow N$$ any minimal geodesic in $N$, which we choose to parametrise by arc-length. Since $\gamma$ is a minimal closed geodesic $\gamma([0,2\pi])$ is isometric to the circle of radius $\frac{L(\gamma)}{2\pi}$. In the same way, $g$ minimal geodesic implies that $g(]0,T[)$ is isometric to $]0,T[$. From \eqref{eq:geodesicdistanceproduct} we deduce that $$\gamma([0,2 \pi]) \times g(]0,T[) \subset M \times N$$ is isometric to the cylinder of radius $\frac{L(\gamma)}{2\pi}$ and height $T$. The fractional exponent of this cylinder is the same as $\beta_{\mathbb{S}^1 \times\left] 0,\frac{2 \pi T}{L(\gamma)} \right[}$ (see Remark \ref{Rem:homo}), which is null from Theorem \ref{Thm:cylinder}. From Remark \ref{Rem:subspace} we deduce that $\beta_{M\times N}$ is also null.\qedhere
	\end{proof}
	\begin{Exe}For every $d \geq 2$ the $d$-dimensional flat torus $\mathbb{T}^d:=\underbrace{\mathbb{S}^1 \times \cdots \times \mathbb{S}^1}_{\text{d times}}$ has fractional index $0$.
	\end{Exe}
	\begin{Exe} For every $n \geq 1$, $\mathbb{S}^n \times \mathbb{R}$ has fractional index $0$.
	\end{Exe}
	
	\section{Perturbation of the product distance} \label{sec:perturbation}
	\sectionmark{Perturbation of the result}
	In the following section we look at $\mathbb{S}^1\times ]0,\varepsilon[$ endowed with a distance which is not the product distance, but which converges to $d_{\mathbb{S}^1 \times \mathbb{R}}$ as $z \in ]0,\varepsilon[$ is close to $0$. 
	We give in Theorem \ref{perturbation} a bound on the fractional index in this case, which depends on some rate of convergence towards the cylinder distance. In Section \ref{subsec:examples_revo} we consider  some surfaces of revolution as examples.
	
	\begin{Thm} \label{perturbation}
		Let us consider a distance $d'$ on $\mathbb{S}^1 \times ]0,\varepsilon[$ and denote by $E'$ the resulting metric space. We define for very $h\in]0,\varepsilon[$
		$$ \Delta(h):=\sup_{z_1,z_2 \leq h} ~\sup_{\theta_1,\theta_2 \in \mathbb{S}^1} \left|d'[(\theta_1,z_1),(\theta_2,z_2)]-d[(\theta_1,z_1),(\theta_2,z_2)] \right|.$$
		where $d$ denotes the classical distance on the cylinder. We call $$\delta_{E'}:=\sup \left\{\delta>0,~ \Delta(h) \! \! \underset{ h \rightarrow 0^+\! \!}{=}  \! O\left( h^\delta \right) \right\}.$$
		If $\delta_{E'}$ is finite we obtain that the fractional index of $E'$ $\beta_{E'}$ verifies $$\beta_{E'}\leq\frac{3}{\delta_{E'}+1},$$
		and if $\delta_{E'}=+\infty,$ $$ \beta_{E'}=0.$$
	\end{Thm}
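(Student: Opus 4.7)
My strategy is to recycle the sequence of configurations from Section \ref{subsec:prooftheorem}, replacing $d$ by $d'$, and to bound the resulting discrepancy through a careful splitting according to pair distance. Fix $H > 3/(2(\delta_{E'}+1))$ and choose $\delta < \delta_{E'}$ sufficiently close to $\delta_{E'}$ (so that $H > 3/(2(\delta+1))$ still holds and $\Delta(h) \leq C h^\delta$ near $0$). With parameters $0 < \beta < \gamma < 1$ and $\beta > 1-2H$ to be tuned, I would consider the points $P^k_i := (i/(4N), k/N^\gamma)$, $1 \leq i \leq 4N$, $1 \leq k \leq \lfloor N^\beta \rfloor$, with signs $c^k_i := (-1)^i$, and set $Q'_N := \sum c^k_i c^l_j\, (d')^{2H}(P^k_i,P^l_j)$. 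Writing $Q'_N = Q_N + R_N$ with $Q_N$ the analogous sum for the cylinder distance, the proof of Theorem \ref{Thm:cylinder} already yields $Q_N \sim (H/4^H)\,N^{2\beta}$, so it suffices to show $R_N = o(N^{2\beta})$.

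To estimate the remainder I would introduce a threshold $d_0 > 0$ and split the sum according to whether $d(P_i,P_j) \leq d_0$ or not. For the \emph{small-distance} pairs, the triangle inequality gives $|(d')^{2H}-d^{2H}| \lesssim d_0^{2H}$ as long as $\Delta(h_N) \lesssim d_0$ (where $h_N := N^{\beta-\gamma}$ is the maximal height used in the configuration); a direct count shows there are $O(N^{2+\beta} d_0^2/h)$ such pairs, with $h := N^{-\gamma}$ the spacing between circles. For the \emph{large-distance} pairs, applying the mean value theorem to $x \mapsto x^{2H}$ on $[d_0/2,+\infty)$ yields $|(d')^{2H}-d^{2H}| \lesssim d_0^{2H-1}\,\Delta(h_N)$, and there are at most $O(N^{2+2\beta})$ such pairs. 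Balancing the two contributions leads to the choice $d_0 := (h_N\,\Delta(h_N))^{1/3}$; using $\Delta(h_N) = O(h_N^\delta)$, the total error then simplifies to $|R_N| = O(N^{2+2\beta + (\beta-\gamma)E})$ with $E := (2H(\delta+1)+2\delta-1)/3$.

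The rest is a feasibility check. The inequality $R_N = o(N^{2\beta})$ becomes $\gamma-\beta > 6/(2H(\delta+1)+2\delta-1)$; combined with $\beta > 1-2H$ and $\gamma < 1$, the system is soluble exactly when $2H(\delta+1) > 3$, the underlying quadratic in $H$ factoring cleanly because its discriminant is $(2\delta+5)^2$. Hence for every $H > 3/(2(\delta_{E'}+1))$ I can find admissible $\beta,\gamma$ making $Q'_N \to +\infty$, which shows $(d')^{2H}$ is not negative definite, and the bound $\beta_{E'} \leq 3/(\delta_{E'}+1)$ follows by letting $\delta \nearrow \delta_{E'}$. When $\delta_{E'} = +\infty$ the same scheme applies for any fixed $H > 0$ by picking $\delta$ large enough, yielding $\beta_{E'} = 0$. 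The step I expect to be the main obstacle is the pair-counting for small distances: one must distinguish between the regime $d_0 \geq h$ (pairs on several adjacent circles contribute) and $d_0 < h$ (only same-circle pairs are close), verify that the optimal $d_0$ lands in the former regime at the critical parameters so that the stated bound applies, and confirm the side-conditions $\Delta(h_N) \lesssim d_0$ used both for the concavity estimate on small pairs and for the mean value estimate on large pairs.
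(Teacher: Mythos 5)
Your proposal is correct in outline and arrives at the right threshold, but your discrepancy estimate is genuinely different from, and more elaborate than, the paper's. Both recycle the configuration $(P^k_i)$ from Theorem~\ref{Thm:cylinder}, write $Q'_N = Q_N + R_N$, and need $R_N = o(N^{2\beta})$. The paper estimates $R_N$ in one stroke: since $d(P^k_i,P^l_j) \geq \frac{1}{4N}$ for every pair, a mean-value bound on $x\mapsto x^{2H}$ gives $|R_N| = O\bigl(N^{2\beta+2}\cdot N^{1-2H}\cdot \Delta(N^{\beta-\gamma})\bigr)$, and imposing $\delta(\beta-\gamma)<2H-3$ (condition~\eqref{condab2}) already makes this $o(N^{2\beta})$. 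Your split into small and large pairs with the balanced threshold $d_0 = (h_N\Delta(h_N))^{1/3}$ is a finer count, but, as your own arithmetic shows, it produces exactly the same critical value $H=3/(2(\delta+1))$ because the binding constraint is $\gamma-\beta < 2H$ and both versions of the error estimate become feasible at the same $H$. The paper's cruder uniform bound is therefore a free simplification and avoids the regime bookkeeping you correctly flag as the delicate step.

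That delicate step does resolve, but not for every admissible choice of $(\beta,\gamma)$. Your count of small-distance pairs requires $d_0\geq h=N^{-\gamma}$, i.e.\ $(\gamma-\beta)(1+\delta)\leq 3\gamma$; this holds when $\gamma$ is close to $1$ and $\gamma-\beta$ is taken close to its lower bound $6/(2H(\delta+1)+2\delta-1)$ (which is $<3/(1+\delta)$ precisely when $H>3/(2(\delta+1))$), but it fails if $\gamma-\beta$ is pushed toward $2H$, so you must pick parameters at the low end of the admissible window for $\gamma-\beta$. The side-condition $\Delta(h_N)=O(d_0)$ reduces to $\delta\geq(1+\delta)/3$, which is automatic in the nontrivial regime $\delta_{E'}>2$. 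Finally, for $\delta_{E'}\leq 2$ the window $\bigl(3/(2(\delta_{E'}+1)),\,1/2\bigr)$ of usable $H$ is empty and the whole scheme is vacuous; you should note, as Remark~\ref{uninteresting} does, that in this case $3/(\delta_{E'}+1)\geq 1\geq\beta_{E'}$ holds automatically.
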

	\begin{Rem} \label{uninteresting}
		The result is uninteresting if $0<\delta_{E'}\leq 2$ since from Remark \ref{Rem:subspace} it is then clear that $$\beta_{E'}\leq 1 \leq \frac{3}{\delta_{E'}+1}.$$ Indeed $\mathbb{S}^1 \times \{0 \}\subset E'$ is isometric to $\mathbb{S}^1$, which has fractional index $1$ (see Istas \cite{istas2011manifold}).
	\end{Rem}
	\begin{proof} Let us assume there exists $\delta>0$ such that $$\Delta(h) \! \! \underset{ h \rightarrow 0^+\! \!}{=}  \! O\left( h^\delta \right)$$ which is true whether $\delta_{E'}$ is finite or $+\infty$. With the above remark the theorem is obvious for $0<\delta_{E'}\leq 2$. From now on we assume that $\delta_{E'} >2$. We consider $\delta < \delta_{E'}$ and \begin{equation} \label{condH} \frac{1}{2}> H>\frac{3}{2(\delta+1)}.\end{equation}
		We now apply the exact scheme of the proof of Theorem \ref{Thm:cylinder}, with the new distance $d'$. Let us recall that the proof of Theorem \ref{Thm:cylinder} lies on the existence of $\beta$ and $\gamma$ such that \begin{equation} \label{condab1} 1-2H < \beta < \gamma < 1.\end{equation} Our assumption \eqref{condH} allows us to choose $\beta$ and $\gamma$ such that besides \eqref{condab1} we have
		\begin{equation} \label{condab2} \delta(\beta-\gamma) < 2H-3, \end{equation} which will be useful later.
		With the notations of Section \ref{subsec:prooftheorem} we consider
		\begin{alignat}{1}
			Q'_N :&= \sum_{k,l=1}^{\lfloor N^\beta \rfloor} \sum_{i,j=1}^N c_j c_j  \left[d' (P_i^k,P_j^l)\right]^{2H}\\
			&= \sum_{k,l=1}^{\lfloor N^\beta \rfloor} \sum_{i,j=1}^N c_j c_j \left[d (P_i^k,P_j^l)+d' (P_i^k,P_j^l)-d (P_i^k,P_j^l) \right]^{2H} \nonumber \\
			&=\sum_{k,l=1}^{\lfloor N^\beta \rfloor} \sum_{i,j=1}^N c_j c_j \left[ d \left(P_i^k,P_j^l\right)\right]^{2H} \left[1+\frac{d' (P_i^k,P_j^l)-d (P_i^k,P_j^l)}{d (P_i^k,P_j^l)} \right]^{2H}. \label{Q'N}
		\end{alignat}
		As  the maximum altitude of all points considered is $\frac{\left\lfloor N^\beta \right\rfloor}{N^\gamma}$, using \eqref{condab2} we obtain for every $i,j,k,l$
		\begin{equation} \label{EO} \left| d' (P_i^k,P_j^l)-d (P_i^k,P_j^l) \right| \leq \Delta\left(N^{\beta-\gamma}\right)=O\left( N^{\delta (\beta-\gamma)}\right)=o\left(N^{2H-3}\right),\end{equation}
		moreover \begin{equation} \label{min_d} d (P_i^k,P_j^l)\geq \frac{1}{4N} \end{equation}
		so that $\displaystyle{\frac{d' (P_i^k,P_j^l)-d (P_i^k,P_j^l)}{d (P_i^k,P_j^l)}}$ tends towards $0$ as $N$ goes to infinity  for every $i,j,k,l$. 
		
		Taylor expansions yields
		$$ Q'_N=Q_N + O \! \left( \sum_{k,l=1}^{\lfloor N^\beta \rfloor} \sum_{i,j=1}^N c_j c_j 2H d^{2H-1} (P_i^k,P_j^l)~ \left( d'\left(P_i^k,P_j^l\right)-d\left(P_i^k,P_j^l\right) \right) \right). $$
		
		We compute
		\begin{alignat*}{1}
			|Q'_N-Q_N|&  = O \left( \sum_{k,l=1}^{\lfloor N^\beta \rfloor} \sum_{i,j=1}^N H d^{2H-1}(P_i^k,P_j^l) ~ \Delta\left(N^{\beta-\gamma}\right) \right)\\
			& = O \left( \lfloor N^\beta \rfloor^2 (4N)^2 \left(\frac{1}{N}\right)^{2H-1}\right)\Delta\left(N^{\beta-\gamma}\right),
		\end{alignat*}
		using \eqref{min_d} again and \eqref{EO} we obtain
		\begin{equation} \label{est}
			|Q'_N-Q_N|=O \left(N^{2\beta + 2 - 2H +1} \right) o \left( N^{2H-3} \right)= o \left( N^{2\beta}\right).
		\end{equation}
		Now given \eqref{condab1} and because $H<1/2$ we still have (see \eqref{aswewanted})
		$$ Q_N \underset{N \rightarrow \infty}{\sim} \frac{H}{4^{H}} \cdot  N^{2\beta},$$
		hence $$\displaystyle{ Q'_N \underset{N \rightarrow \infty}{=} \frac{H}{4^{H}} \cdot  N^{2\beta} + o\left(N^{2\beta}\right)}$$
		is positive for $N$ large enough, which implies that $\left(d'\right)^{2H}$ is not negative definite and therefore $\beta_{E'}<2H$. Since this is true for every $\delta <\delta_C$ and every $H>\frac{3}{2(\delta+1)}$, the theorem is proven.
	\end{proof}
	We now turn to the case of some Riemannian surfaces in a given chart.
	\begin{Thm} \label{Prosurf}
		Let $I$ be an open real interval such that there exists $\varepsilon>0$, $]0,\varepsilon[ \subset I$ and consider the case where $E'$ is  $\mathbb{S}^1 \times I$ endowed with the Riemannian metric
		$$ \langle~,~\rangle'=(1+f_1(\theta,z))d\theta^2+(1+f_2(\theta,z))dz^2,$$
		with $f_1$ and $f_2$ $C^\infty$ functions with values in $]-1,+\infty[$.
		
		Let us assume that the Riemannian manifold $E'$ is complete, and that
		\begin{equation} \label{eq:hypofinite} \sup_{P,Q \in \mathbb{S}^1 \times ]0, \varepsilon [} \sup \left\{ \max\left(\int_{\gamma_{d'}}|d\theta|, \int_{\gamma_{d'}}  |dz|\right), \begin{aligned}  \gamma_{d'} \text{ minimal geodesic in}\\ \text{$E'$ between } P \text{ and } Q \end{aligned} \right\} < \infty \end{equation} For every $h \in  I$ we define
		$$z^+(h):=\sup_{P,Q\in \, \mathbb{S}^1 \! \times ]0,h]} \inf \,\left\{
		\begin{aligned}
		&\max_t(z(t)) \text{ such that } t\mapsto (\theta(t),z(t)) \text{ is a }\\ &\text{minimal}  \text{ geodesic in $E'$ between } P \text{ and } Q
		\end{aligned}
		\right\},$$
		$$z^-(h):=\sup_{P,Q\in \, \mathbb{S}^1 \! \times ]0,h]} \sup \,\left\{
		\begin{aligned}
		&\min_t(z(t)) \text{ such that } t\mapsto (\theta(t),z(t)) \text{ is a }\\ &\text{minimal}  \text{ geodesic in $E'$ between } P \text{ and } Q
		\end{aligned}
		\right\},$$
		$$F_1(h):=\sup_{z\in ]z^-(h),z^+(h)[} \, \, \max _{\theta \in \, \mathbb{S}^1}\sqrt{|f_1(\theta,z)|}, ~\delta_1:=\sup \left\{\delta>0, F_1(h)\! \! \underset{ h \rightarrow 0^+\! \!}{=} O\left( h^\delta \right) \right\}, $$ $$F_2(h):=\sup_{z \in ]z^-(h),z^+(h)[} \, \, \max _{\theta \in \, \mathbb{S}^1}\sqrt{|f_2(\theta,z)|}, ~ \delta_2 :=\sup \left\{\delta>0, F_2(h) \! \! \underset{ h \rightarrow 0^+\! \!}{=}  O\left( h^\delta \right) \right\}.$$
		If $\min(\delta_1,\delta_2)$  is finite we have
		$$\beta_{E'} \leq \frac{3}{\min\left( \delta_1,\delta_2\right)+1},$$
		and if $\delta_1=\delta_2=+\infty$,
		$$ \beta_{E'}=0. $$
	\end{Thm}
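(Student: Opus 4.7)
The plan is to reduce Theorem~\ref{Prosurf} to Theorem~\ref{perturbation} by showing that $\delta_{E'}\geq\min(\delta_1,\delta_2)$; the conclusion then follows immediately. The bridge is a direct length-comparison between the Riemannian metric $\langle\,,\,\rangle'$ and the flat product metric $d\theta^2+dz^2$, which controls the quantity $\Delta(h)$ appearing in Theorem~\ref{perturbation}.

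First I would establish the pointwise length estimate. For any piecewise $C^1$ curve $\gamma=(\theta(t),z(t))$ whose image is contained in $\mathbb{S}^1\times\,]z^-(h),z^+(h)[$, using the elementary inequality $|\sqrt{A+B}-\sqrt{A}|\leq\sqrt{|B|}$ (valid whenever $A,A+B\geq 0$) applied pointwise to $A=\dot\theta^2+\dot z^2$ and $B=f_1\dot\theta^2+f_2\dot z^2$, together with the subadditivity $\sqrt{|B|}\leq\sqrt{|f_1|}\,|\dot\theta|+\sqrt{|f_2|}\,|\dot z|$ and the definitions of $F_1,F_2$, one obtains after integration
\begin{equation*}
|L'(\gamma)-L(\gamma)|\leq F_1(h)\int_\gamma|d\theta|+F_2(h)\int_\gamma|dz|,
\end{equation*}
where $L'$ denotes length with respect to $\langle\,,\,\rangle'$ and $L$ length with respect to the flat product metric.

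Given $P,Q\in\mathbb{S}^1\times\,]0,h]$, I would apply this estimate in two ways. First, to a minimal $d'$-geodesic $\gamma_{d'}$ from $P$ to $Q$ chosen so that its image lies in $\mathbb{S}^1\times\,]z^-(h),z^+(h)[$, which is possible up to arbitrarily small error by the very definitions of $z^\pm(h)$: hypothesis~\eqref{eq:hypofinite} uniformly bounds $\int_{\gamma_{d'}}|d\theta|$ and $\int_{\gamma_{d'}}|dz|$ by some constant $K$, and combining $d'(P,Q)=L'(\gamma_{d'})$ with $L(\gamma_{d'})\geq d(P,Q)$ yields $d(P,Q)-d'(P,Q)\leq K(F_1(h)+F_2(h))$. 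Second, to the flat $d$-geodesic $\gamma_d$, an explicit arc of helix staying in $\mathbb{S}^1\times[\min(z_1,z_2),\max(z_1,z_2)]\subset\mathbb{S}^1\times\,]0,h]$ with $\theta$- and $z$-variations bounded by $\pi$ and $h$: since $L'(\gamma_d)\geq d'(P,Q)$ and $L(\gamma_d)=d(P,Q)$, this gives $d'(P,Q)-d(P,Q)\leq C(F_1(h)+F_2(h))$. Combining, $\Delta(h)=O(F_1(h)+F_2(h))=O(h^\delta)$ for every $\delta<\min(\delta_1,\delta_2)$, so $\delta_{E'}\geq\min(\delta_1,\delta_2)$, and Theorem~\ref{perturbation} yields $\beta_{E'}\leq 3/(\min(\delta_1,\delta_2)+1)$ in the finite case, and $\beta_{E'}=0$ when $\delta_1=\delta_2=+\infty$.

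The main obstacle lies in the two geodesic-selection steps. On the $d'$ side, a compactness/continuity argument, relying on the completeness assumption on $E'$, is needed to exhibit a single minimal $d'$-geodesic jointly approaching the $\inf$ and $\sup$ in the definitions of $z^\pm(h)$ so that its image really sits in $\mathbb{S}^1\times\,]z^-(h),z^+(h)[$; on the $d$ side, one must verify that $F_1,F_2$ genuinely control $|f_i|$ along the flat helix, that is, that its $z$-range $[\min(z_1,z_2),\max(z_1,z_2)]\subset\,]0,h]$ lies inside $]z^-(h),z^+(h)[$. Once these geometric points are in place, the rest is a routine application of the length-comparison estimate followed by the invocation of Theorem~\ref{perturbation}.
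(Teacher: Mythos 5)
Your proof is correct and is essentially the paper's own argument: you bound $\Delta(h)\leq C\bigl(F_1(h)+F_2(h)\bigr)$ by comparing the lengths $L$ and $L'$ of the flat helix $\gamma_d$ (for the upper bound on $d'-d$) and of the minimal $d'$-geodesic $\gamma_{d'}$ (for the lower bound), using pointwise subadditivity of the square root together with the integral bounds coming from \eqref{eq:hypofinite}, and then invoke Theorem~\ref{perturbation}. The geodesic-selection subtlety you flag --- ensuring the chosen $\gamma_{d'}$ and $\gamma_d$ genuinely lie in the strip $\mathbb{S}^1\times\,]z^-(h),z^+(h)[$ so that $F_1,F_2$ control $|f_i|$ along them --- is a real point, but it is also left implicit in the paper's proof, so your treatment is, if anything, slightly more careful.
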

	
	\begin{proof}
		Let us consider $P_1=(\theta_1,z_1)$ and $P_2=(\theta_2,z_2)$ in $\mathbb{S}^1\times ]0,\varepsilon[$. Let us denote by $\gamma_{d'}$ a minimal geodesic between $P_1$ and $P_2$ in $E'$, and by $\gamma_d$ a minimal geodesic between the same points in the cylinder endowed with its classical distance $d$. We also set
		$$C=\sup_{P_1,P_2 \in \, \mathbb{S}^1 \times ]0,\varepsilon[}  \, \sup_{\gamma_d,\gamma_{d'}} ~~ \max \left\{\int_{\gamma_{d}} \! \! |d\theta|, \int_{\gamma_{d}} \! \! |dz|, \int_{\gamma_{d'}} \! \! |d\theta|,\int_{\gamma_{d'}} \! \! |dz|\right\}.$$
		Let us notice that $C$ is finite from hypothesis \eqref{eq:hypofinite}. Indeed the curves $\gamma_d$ are minimal geodesics between points in $\mathbb{S}^1 \times ]0,\varepsilon[$, hence $\int_{\gamma_{d}} \! \! |dz| < \varepsilon$ and $\int_{\gamma_{d}} \! \! |dz| < \pi$.
		We now assume that  $z_1,z_2\leq h$ and compute
		\begin{alignat*}{1}
			d'(P_1,P_2) & =\int_{\gamma_{d'}} \! \!  \left(\left\langle \gamma_{d'}',\gamma_{d'}'\right\rangle'\right)^{1/2} \leq \int_{\gamma_{d}} \! \!  \left(\left\langle \gamma_{d}',\gamma_{d}'\right\rangle'\right)^{1/2}\\
			&= \int_{\gamma_{d}} \Bigl((1+f_1(\theta,z))d\theta^2+(1+f_2(\theta,z))dz^2\Bigr)^{1/2} \\
			& \leq \int_{\gamma_{d}} \Bigl((1+\max|f_1 \circ \gamma_d | )d\theta^2+(1+\max|f_2 \circ \gamma_d |) dz^2\Bigr)^{1/2}, \\
			& \text{using twice }(a+b)^{1/2}\leq a^{1/2}+b^{1/2} \text{ for } a,b>0:\\
			& \leq \int_{\gamma_{d}} \! \left(d\theta^2+dz^2\right)^{1/2} +\max|f_1 \circ \gamma_d |^{1/2} \int_{\gamma_{d}} \! |  d\theta| + \max|f_2 \circ \gamma_d |^{1/2} \int_{\gamma_{d}} \! |dz| \\
			& \leq d(P_1,P_2)  + C \left(\max|f_1 \circ \gamma_d |^{1/2}+\max|f_2 \circ \gamma_d |^{1/2}\right),\\
		\end{alignat*}
		from which we deduce
		\begin{equation} d'(P_1,P_2) \leq d(P_1,P_2) + C(F_1(h)+F_2(h)).\end{equation}

		In a similar way and with $$f_{i}^-(\theta,z) := -\min(f_{i}(\theta,z),\, 0):$$
		\begin{alignat*}{1}
			d'(P_1,P_2) &= \int_{\gamma_{d'}} \Bigl((1+f_1(\theta,z))d\theta^2+(1+f_2(\theta,z))dz^2\Bigr)^{1/2}\\
			& \geq \int_{\gamma_{d'}} \Bigl((1-f_1^{-}(\theta,z))d\theta^2+(1-f_2^{-}(\theta,z))dz^2\Bigr)^{1/2}\\
			& \text{using }(a-b)^{1/2}\geq a^{1/2}-b^{1/2} \text{ for } a>b>0:\\
			& \geq  \int_{\gamma_{d'}} \! \!\left(d\theta^2+dz^2 \right)^{1/2}-\max|f_1 \circ \gamma_{d'}|^{1/2}\int_{\gamma_{d'}} \! \! |d\theta| - \max|f_2 \circ \gamma_{d'}|^{1/2} \int_{\gamma_{d'}} \! \! |dz|\\
			& \geq  \int_{\gamma_{d}} \left(d\theta^2+dz^2 \right)^{1/2}- C\left(\max|f_1 \circ \gamma_d'|^{1/2}+\max|f_2 \circ \gamma_d'|^{1/2}\right),
		\end{alignat*}
		hence \begin{equation} d'(P_1,P_2) \geq d(P_1,P_2) + C(F_1(h)+F_2(h)). \end{equation}
		Finally for every $P_1=(\theta_1,z_1)$ and $P_2=(\theta_2,z_2)$ with $z_1,z_2 \leq h$ we have  $$ |d(P_1,P_2)-d'(P_1,P_2)| \leq C \left(F_1(h)+F_2(h)\right),$$ hence $$\Delta(h)\leq  C \left(F_1(h)+F_2(h)\right).$$
		This implies that $\delta_{E'}$ (defined in Theorem \ref{perturbation}) is such that $$\delta_{E'} \geq \min(\delta_1,\delta_2),$$
		and we apply Theorem \ref{perturbation} to conclude.\qedhere
	\end{proof}
	\begin{Rem} \label{Rem:boundedspacescheckassumption} Assumption \eqref{eq:hypofinite} is for example verified if $E'$ a metric space of finite diameter and $f_1$ and $f_2$ are bounded below by $m>-1$.  Indeed for every $P,Q$ in $\mathbb{S}^1 \times ]0, \varepsilon [$ and $\gamma_{d'}$ a minimal geodesic from $P$ to $Q$ in $E'$ we have
		$$ \int_{\gamma_{d'}} \left((1+f_1(\theta,z))d\theta^2+(1+f_2(\theta,z))dz^2\right)^{1/2} = d(P,Q),$$
		hence $$ \int_{\gamma_{d'}} \left((1+f_1(\theta,z))d\theta^2\right)^{1/2} \leq d(P,Q),$$
		from which we deduce
		$$ \int_{\gamma_{d'}} |d\theta| \leq \frac{d(P,Q)}{\inf(1+f_1(\theta,z))^{1/2}}.$$
		The same argument gives	$$ \int_{\gamma_{d'}} |dz| \leq \frac{d(P,Q)}{\inf(1+f_2(\theta,z))^{1/2}}.$$
	\end{Rem}
	\begin{Rem} Let $S$ be a complete, orientable Riemannian manifold of dimension $2$, with
		$$\gamma : [0,2\pi] \rightarrow S $$ a minimal closed geodesic. Without loss of generality (see Remark \ref{Rem:homo}) we assume that the minimal geodesic has length $L(\gamma)=2\pi$ and is parametrised by arc-length. If we choose a $C^\infty$ vector field $v$ along $\gamma$ such that for every $\theta$, $\|v(\theta)\|_S=1$ and $\langle v(\theta), \gamma'(\theta) \rangle_S =0$, and define
			\begin{alignat*}{1} \Phi : \mathbb{S}^1 \times \mathbb{R} & \rightarrow S\\
				(\theta,z) &\mapsto Exp_{\gamma(\theta)}(z v(\theta)),
		\end{alignat*}
		it is possible to check that there exists $\varepsilon>0$ such that the restriction of $\Phi$ to $\mathbb{S}^1\times ]-\varepsilon, \varepsilon [$  is a $C^\infty$ diffeomorphism onto its image $$\mathcal{V}_\varepsilon=\Phi(\mathbb{S}^1\times ]-\varepsilon, \varepsilon [).$$ Furthermore $\mathcal{V}_\varepsilon$ is a neighbourhood of $\gamma$. For every $p\in \mathcal{V}_\varepsilon$ we get the coordinates $(\theta,z)=\Phi^{-1}(p)$, and one can check that the inner product of $S$ is given by
		$$ \langle ~, ~ \rangle_S = \left(1+f_1(\theta,z)\right)d\theta^2 + dz^2,$$
		where $f_1$ is a $C^\infty$ function with values in $]-1,+\infty[$ such that $f_1(\theta,0)=0$ for every $\theta$.
		
		However it is not possible to apply Theorem \ref{Prosurf} without global assumptions on $S$. Indeed imposing some conditions on the inner product in a neighbourhood of $\gamma$ is not enough to control the geodesic distance, as minimal geodesics between points close to $\gamma$ may take values in the whole of $S$ (see Remark \ref{Rem:submanifold}).
		
		 In this case we need that all the minimal geodesics between points close enough to $\gamma$ take values in $\mathcal{V}_\varepsilon$, in order to have $z^+(h)$ and $z^-(h)$ properly defined. Furthermore if we don't have $\lim\limits_{h\rightarrow 0} z^+(h) = \lim\limits_{h\rightarrow 0} z^-(h)=0 $, we don't have $$\lim\limits_{h\rightarrow 0} F_1(h)= \lim\limits_{h\rightarrow 0} F_2(h)=0,$$  hence $\delta_1=\delta_2=-\infty$ and Theorem \ref{Prosurf} claims nothing. In the next section we consider revolution surfaces with increasing generating function and apply Theorem \ref{Prosurf}.
	\end{Rem}
		
	\subsection{Some surfaces of revolution as examples} \label{subsec:examples_revo}
	In all that follow, we consider a $\mathcal{C}^\infty$ function $r:\mathbb{R}^+ \rightarrow \mathbb{R}^+_*$ and we  call \emph{the surface of revolution with generating function $r$} the surface $\Gamma$ of $\mathbb{R}^3$ admitting the parametrisation
	
	\begin{equation} \label{pararotatio}  X_{\Gamma} : (\theta, z)\mapsto\left( \begin{array}{c}r(z)\cos(\theta) \\ r(z) \sin(\theta) \\ z\end{array}\right) .\end{equation}
	
	\begin{Lem} \label{Lem:Clair} Let $\Gamma$ be a surface of revolution with generating function $r$. If $r$ is increasing, for every geodesic
		\begin{alignat*}{1}
			g : [0,T] &\rightarrow \Gamma \\
			t &\mapsto (\theta(t),z(t))
		\end{alignat*}
		and for every $t\in [0,T],$ $$z(t) \leq \max(z(0),z(T)).$$
		
		In particular for every $h\geq 0$, $$z^+(h)=h.$$
	\end{Lem}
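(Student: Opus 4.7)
The key input is the rotational symmetry of $\Gamma$. In the chart $(\theta, z)$, the first fundamental form obtained by pulling back the Euclidean metric of $\mathbb{R}^3$ along $X_\Gamma$ reads
\[
ds^2 = r(z)^2 \, d\theta^2 + (1 + r'(z)^2) \, dz^2,
\]
and since the coefficients do not depend on $\theta$, the vector field $\partial_\theta$ generates an isometric $\mathbb{S}^1$-action. The associated Noether first integral is the Clairaut quantity $L := r(z(t))^2 \dot\theta(t)$, which is conserved along any geodesic $g(t) = (\theta(t), z(t))$. The speed squared $c^2 := r(z)^2 \dot\theta^2 + (1+r'(z)^2) \dot z^2$ is likewise conserved since geodesics have constant speed.

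The plan is to eliminate $\dot\theta$ between these two conservation laws, which yields the one-dimensional equation
\[
(1+r'(z(t))^2) \, \dot z(t)^2 = c^2 - \frac{L^2}{r(z(t))^2}.
\]
If $c = 0$ the geodesic is constant, so assume $c > 0$. Suppose by contradiction that $\max_{t \in [0,T]} z(t) > \max(z(0), z(T))$, so that the maximum is attained at some interior time $t^* \in (0,T)$. Then $\dot z(t^*) = 0$, and the displayed equation gives $r(z(t^*))^2 = L^2/c^2$. The left-hand side of the same equation is nonnegative for every $t$, hence $r(z(t))^2 \geq L^2/c^2 = r(z(t^*))^2$ for every $t \in [0,T]$. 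Since $r$ is positive and increasing, this forces $z(t) \geq z(t^*)$ for every $t$; specialising to $t = 0$ contradicts $z(0) < z(t^*)$. Therefore the maximum of $z$ along the geodesic is attained at an endpoint.

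The main obstacle is recognising Clairaut's relation as the right first integral; once the ODE above is written down the argument is a short comparison. It is worth recording that nothing in the proof uses minimality, so the bound $z(t) \leq \max(z(0), z(T))$ holds for every geodesic, not only the minimal ones.

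For the identity $z^+(h) = h$, the upper bound is immediate from the first part: every pair $P, Q \in \mathbb{S}^1 \times ]0, h]$ has $z$-coordinates $\leq h$, so every minimal geodesic joining them satisfies $\max_t z(t) \leq \max(z(P), z(Q)) \leq h$, whence the inner infimum and hence the outer supremum are $\leq h$. The matching lower bound follows by restricting the supremum to pairs $P, Q \in \mathbb{S}^1 \times \{h\} \subset \mathbb{S}^1 \times ]0, h]$: every curve passing through $P$ satisfies $\max_t z(t) \geq z(P) = h$, so the inner infimum equals $h$ for such pairs and the outer supremum is therefore $\geq h$.
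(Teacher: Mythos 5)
Your proof is correct and uses the same key ingredient as the paper: Clairaut's relation for geodesics on a surface of revolution, combined with the observation that an interior maximum of $z$ would force $\dot z=0$ there, leading to a contradiction. The paper states Clairaut's relation as $r(z)\cos\varphi=\text{const.}$ and compares with a point $t_1$ where $z'(t_1)\neq 0$; you instead eliminate $\dot\theta$ between the Noether integral $r^2\dot\theta$ and the constant speed to get the one-dimensional energy equation $(1+r'^2)\dot z^2=c^2-L^2/r^2$, from which the comparison is algebraically immediate --- this is equivalent, arguably a touch cleaner, and your explicit two-sided argument for $z^+(h)=h$ fills in a step the paper merely declares clear.
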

	\begin{proof} We will use Clairaut's relation (see \cite{docarmos}) which states that along a given geodesic of a surface of revolution
		
		\begin{equation} \label{Clairaut} r(z(t))\cos (\varphi(t))=const.~,\end{equation}
		
		where $\varphi(t)\in [0,\pi/2]$ is the acute, nonoriented angle that makes the geodesic with the parallel that intersects it at $t=0$.
		
		Since any geodesic is differentiable, so is $t \mapsto z(t)$. Let us assume that $z(t)$ has a global maximum in $t_0 \in ]0,T[$ and that there exists $t_1 \in ]0,T[$ such that $z'(t_1)\neq 0$. Since $z(t_0)$ is a maximum we have $z'(t_0)=0$, which is equivalent to $\varphi(t_0)=0$. Because $z'(t_1)\neq 0$, $\varphi(t_1)\in ]0,\pi/2]$.  We have
		$$ \cos(\varphi(t_1))<\cos(\varphi(t_0))=1.$$
		Using $r$ increasing and $z(t_1)\leq z(t_0)$ maximum, we obtain
		
		$$ r(z(t_1))\cos(\varphi(t_1))< r(z(t_0))\cos(\varphi(t_0)),$$
		which contradicts Clairaut's relation \eqref{Clairaut}.
		
		In the end, either $z'(t)=0$ for every $t\in]0,T[$, which means $z(t)=const.$ and the result is clear, either the global maximum of $z$ over $[0,T]$ (which exists since $z$ is continuous) is reached in $t=0$ or $t=T$. We have proven for every geodesic $t\mapsto (\theta(t),z(t))$ that $$\forall t \in [0,T], ~ z(t) \leq \max(z(0),z(T)).$$ Given the definition of $z^+$ (see Theorem \ref{Prosurf}) it is clear that $z^+(h)=h$ for every $h$. The lemma is proven.
	\end{proof}

	\begin{Thm} \label{Thm:revolution} Let $\Gamma$ be a revolution surface with $\mathcal{C}^\infty$ generating function $r$ such that $r$ is increasing.
		If there exists $p>4$ and $c\in \mathbb{R}$ such that \begin{equation} \label{eq:hypopetito} r(z)-r(0)\underset{z\rightarrow 0}{=}cz^p+o\left(z^p \right) \end{equation}
		then $$\beta_\Gamma \leq\frac{6}{p+2}.$$
		If for every $p\in \mathbb{N}$ \begin{equation} \label{eq:hypoplate} r(z)-r(0)\underset{z\rightarrow 0}{=} o\left(z^p \right), \end{equation}
		then $$\beta_\Gamma=0.$$
	\end{Thm}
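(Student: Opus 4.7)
The plan is to reduce Theorem \ref{Thm:revolution} to Theorem \ref{Prosurf} by reading off the Riemannian metric of $\Gamma$ in the $(\theta,z)$-chart \eqref{pararotatio} and then computing the exponents $\delta_1,\delta_2$ directly from the Taylor expansion of $r$ at $0$. By Remark \ref{Rem:homo} I first rescale the ambient $\mathbb{R}^3$ so that $r(0)=1$; this normalisation preserves \eqref{eq:hypopetito} and \eqref{eq:hypoplate} (up to changing the constant $c$) and does not affect $\beta_\Gamma$. A direct computation of the first fundamental form from \eqref{pararotatio} then yields
$$\langle~,~\rangle_\Gamma=r(z)^2\,d\theta^2+(1+r'(z)^2)\,dz^2=(1+f_1(\theta,z))\,d\theta^2+(1+f_2(\theta,z))\,dz^2,$$
with $f_1(\theta,z)=r(z)^2-1$ and $f_2(\theta,z)=r'(z)^2$. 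Since $r$ is increasing with $r(0)=1$, both $f_i$ are nonnegative on $\mathbb{S}^1\times\mathbb{R}^+$, so the setup of Theorem \ref{Prosurf} applies (after, if needed, a smooth even extension of $r$ to a neighbourhood of~$0$ to ensure completeness).

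Next I verify hypothesis \eqref{eq:hypofinite}. Clairaut's relation $r(z)^2\,\dot\theta=\mathrm{const}$ forces $\dot\theta$ to have constant sign along any geodesic, so on a minimal geodesic $\theta$ is monotone and $\int_\gamma|d\theta|=|\Delta\theta|\leq\pi$; the same Clairaut analysis as in Lemma \ref{Lem:Clair} also shows that $\dot z$ can vanish at most once (a global minimum of $z$, since $r$ strictly increasing prevents any parallel from being a geodesic), hence $\int_\gamma|dz|\leq 2\varepsilon$ for minimal geodesics between points of $\mathbb{S}^1\times\,]0,\varepsilon[$. Both bounds are uniform in the endpoints, which is exactly \eqref{eq:hypofinite}.

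The asymptotic computation of $\delta_1,\delta_2$ is then routine. Lemma \ref{Lem:Clair} gives $z^+(h)=h$, and the trivial estimate $\min_t z(t)\leq\min(z(P),z(Q))\leq h$ gives $z^-(h)\leq h$, so both collapse to $0$ with $h$. As $f_1,f_2$ depend only on $z$ and are nondecreasing in $z$,
$$F_1(h)=\sqrt{r(h)^2-1}\quad\text{and}\quad F_2(h)=r'(h).$$
Under \eqref{eq:hypopetito}, $r(h)^2-1=(r(h)-1)(r(h)+1)$ is equivalent to $2c\,h^p$, giving $F_1(h)=O(h^{p/2})$; and since $r$ is $C^\infty$, the vanishing of the Taylor coefficients of $r$ up to order $p-1$ yields $r'(h)=O(h^{p-1})$, hence $F_2(h)=O(h^{p-1})$. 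Therefore $\delta_1\geq p/2$ and $\delta_2\geq p-1$; as $p>4$ implies $p/2<p-1$, we obtain $\min(\delta_1,\delta_2)\geq p/2$, and Theorem \ref{Prosurf} yields $\beta_\Gamma\leq 3/(p/2+1)=6/(p+2)$. Under \eqref{eq:hypoplate} the same estimates give $\delta_1=\delta_2=+\infty$, and Theorem \ref{Prosurf} gives $\beta_\Gamma=0$.

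The main obstacle I expect is the reduction step itself rather than the asymptotic bookkeeping: one must ensure that Theorem \ref{Prosurf} genuinely applies to $\Gamma$ (completeness after possible extension of $r$, uniform bounds on $\int_\gamma|d\theta|$ and $\int_\gamma|dz|$, and a tight enough control of the range $]z^-(h),z^+(h)[$ so that $F_1,F_2$ only reflect the behaviour of $r$ on a neighbourhood of~$0$ shrinking with $h$). Once this reduction is clean, the Taylor expansion of $r$ gives the claimed bounds directly.
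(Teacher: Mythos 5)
Your proposal is correct and follows essentially the same route as the paper: normalize $r(0)=1$ via Remark \ref{Rem:homo}, compute the first fundamental form to get $f_1=r^2-1$ and $f_2=(r')^2$, check \eqref{eq:hypofinite}, use Lemma \ref{Lem:Clair} to get $z^+(h)=h$, read off $\delta_1\geq p/2$ and $\delta_2\geq p-1$ from the Taylor expansion, and apply Theorem \ref{Prosurf}.

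A few places where the details diverge from the paper are worth flagging. First, the paper checks \eqref{eq:hypofinite} through the general shortcut of Remark \ref{Rem:boundedspacescheckassumption} (finite diameter and $f_1,f_2\geq 0$), rather than re-deriving the uniform Clairaut bounds on $\int_\gamma|d\theta|$ and $\int_\gamma|dz|$; both work, but the remark is cheaper. Second, your identity $F_2(h)=r'(h)$ uses monotonicity of $f_2=(r')^2$ in $z$, which the increasingness of $r$ does not imply; the paper correctly takes $F_2(h)=\max_{z\in[0,h]}|r'(z)|$, though your asymptotic conclusion $F_2(h)=O(h^{p-1})$ still holds because $r'(z)=O(z^{p-1})$ near $0$. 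Third, you pass directly from Theorem \ref{Prosurf} to a bound on $\beta_\Gamma$; the paper instead bounds $\beta_{\Gamma_\varepsilon}$ for $\Gamma_\varepsilon=X_\Gamma(\mathbb{S}^1\times]0,\varepsilon[)$ and then invokes Lemma \ref{Lem:Clair} once more to show that minimal geodesics of $\Gamma$ between points of $\Gamma_\varepsilon$ remain in $\Gamma_\varepsilon$, so that $d_{\Gamma|\Gamma_\varepsilon}=d_{\Gamma_\varepsilon}$ and Remark \ref{Rem:subspace} transports the bound to $\beta_\Gamma$. Finally, the parenthetical even extension of $r$ to enforce completeness would actually create a problem, not solve one: the extended surface contains $\Gamma$, so Remark \ref{Rem:subspace} would give the inequality $\beta_\Gamma\geq\beta_{\text{ext}}$, the wrong direction for what you want, and you would in addition have to show that the restricted and intrinsic distances on $\Gamma$ agree in the extended surface. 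It is simpler to forego the extension, as the paper does, and rely on Lemma \ref{Lem:Clair} to control where minimal geodesics live.
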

	\begin{proof} 	Let us assume that $r(0)=1$. This assumption is done without loss of generality, as we may consider an homothety of a general surface of revolution $\Gamma$ to have $r(0)=1$, without changing the fractional index  (see again Remark \ref{Rem:homo}).
		
		We compute $$\frac{\partial X_\Gamma}{\partial \theta} =  \left( \begin{array}{c} -r(z)\sin(\theta) \\ r(z) \cos(\theta) \\  0 \end{array} \right) ,$$ $$\frac{\partial X_\Gamma}{\partial z} = \left( \begin{array}{c}r'(z)\cos(\theta) \\ r'(z) \sin(\theta) \\ 1\end{array}\right),$$ and deduce the coefficients of the first fundamental form of $\Gamma$ :
		
		\begin{alignat*}{1}E_\Gamma = & \left\langle \frac{\partial X_\Gamma}{\partial \theta}, \frac{\partial X_\Gamma}{\partial \theta} \right\rangle_{\! \! \mathbb{R}^3} = r^2(z),\\
			F_\Gamma = & \left\langle \frac{\partial X_\Gamma}{\partial \theta} , \frac{\partial X_\Gamma}{\partial z} \right\rangle_{\! \! \mathbb{R}^3} = 0,\\
			G_\Gamma = & \left\langle \frac{\partial X_\Gamma}{\partial z} , \frac{\partial X_\Gamma}{\partial z} \right\rangle_{\! \! \mathbb{R}^3}= r'(z)^2+1 .\end{alignat*}
		
		We get the expression of the Riemannian metric
		
		\begin{equation} \label{eq:innerproductsurface} \langle~,~ \rangle_\Gamma=r^2(z) d\theta^2 + \left(1+r'(z)^2\right)dz^2. \end{equation}
		
		Let us now fix a positive $\varepsilon$ and apply Theorem \ref{Prosurf} to $E'=\mathbb{S}^1\times ]0,\varepsilon[$ endowed with the inner product $\langle~,~ \rangle_\Gamma$. It is clear that $E'$ is isometric to the Riemannian manifold $\Gamma_{\varepsilon}:=X_\Gamma\left(\mathbb{S}^1\times ]0,\varepsilon[\right)$ endowed with $\langle~,~ \rangle_\Gamma $.
		
		Let us now check assumption \eqref{eq:hypofinite}, using Remark \ref{Rem:boundedspacescheckassumption}. It is clear that the Riemannian manifold $\Gamma_{\varepsilon}=X_\Gamma\left(\mathbb{S}^1\times ]0,\varepsilon[\right)$ endowed with $\langle~,~ \rangle_\Gamma $ is a metric space of finite diameter. We have \begin{alignat*}{1}f_1(\theta, z) &=  r^2(z)-1, \\ f_2(\theta,z) &=  r'(z)^2.   \end{alignat*}
		Since $r(0)=1$ and $r$ increasing we have $f_1(\theta,z) \geq 0 > -1$ and clearly $f_2(\theta,z) \geq 0 > -1.$ From Remark \ref{Rem:boundedspacescheckassumption}, assumption \eqref{eq:hypofinite} is verified.
		
		Recall that $z^+(h)=h$ from Lemma \ref{Lem:Clair}, and clearly $z^-(h)=0$. Since $f_1$ and $f_2$ do not depend on $\theta$ we get
		\begin{alignat*}{1} 
			F_1(h) & = (r^2(h)-1)^{1/2},\\
			F_2(h) & = \max_{z\in[0,h] } |r'(z)|.  \end{alignat*}
	
	Now under assumption \eqref{eq:hypopetito}, it is clear that we have $$F_1(h)=(2c)^{1/2}h^{p/2}+o\left(h^{p/2}\right),$$
	hence \begin{equation}\label{eq:majoration_delta1} \delta_1 \geq p/2. \end{equation}
	
	Since $r'$ is continuous, there exists $\hat{z}(h)\in [0,h]$ such that $$F_2(h)= \max_{z\in[0,h] } |r'(z)|=|r'(\hat{z}(h))|.$$
	
	From \eqref{eq:hypopetito} we get $$F_2(h)=|r'(\hat{z}(h))|=pc\hat{z}(h)^{p-1}+o\left((\hat{z}(h)^{p-1}\right),$$
	and since $x\mapsto x^{p-1}$ is decreasing,
	$$F_2(h) \leq pch^{p-1}+o\left(h^{p-1}\right),$$
	hence \begin{equation}\label{eq:majoration_delta2} \delta_2 \geq p-1. \end{equation}
	
	Since $p>4$ from \eqref{eq:majoration_delta1} and \eqref{eq:majoration_delta2} we have  $\min(\delta_1,\delta_2) \geq p/2 $, and applying Theorem \ref{Prosurf} we get
	\begin{equation} \label{eq:conclu1} \beta_{\Gamma_\varepsilon} \leq \frac{3}{p/2+1}=\frac{6}{p+2}. \end{equation}
	
	Under assumption \eqref{eq:hypoplate} instead of  \eqref{eq:hypopetito}, the same reasoning gives
	\begin{equation} \label{eq:conclu2} \Gamma_{\varepsilon} =0.\end{equation}
	
	Now since from Lemma \ref{Lem:Clair} we have $z^+(h) =h$, every geodesic in $\Gamma$ between points of $\Gamma_\varepsilon= X_\Gamma(\mathbb{S}^1\times [0,\varepsilon])$ stays in $\Gamma_{\varepsilon}$, hence the geodesic distances $d_{\Gamma}$ and $d_{\Gamma_\varepsilon}$ coincide on $\Gamma_\varepsilon$. This allows to extend the conclusions \eqref{eq:conclu1} and \eqref{eq:conclu2} from $\beta_{\Gamma_{\varepsilon}}$ to $\beta_{\Gamma}$ (see Remark \ref{Rem:subspace}). 	\end{proof}

\begin{Rem} Notice that the assumption $p>4$ is not essential. However for lower values of $p$ the same reasoning gives a bound of $\beta_{\Gamma}$ greater than $1$, and we already know that $\beta_{\Gamma} \leq \beta_{\mathbb{S}^1}= 1$ since the parallel at height $z=0$ in $\Gamma$ is isometric to a circle (see Remark \ref{Rem:subspace}).
\end{Rem}
\begin{Exe} As we can see the bound obtained in Theorem \ref{Thm:revolution} is the smaller as the order of the contact between the surface $\Gamma $ and the cylinder is great. In particular if we consider the generating function $r(z)=1+e^{-\frac{1}{z}}$ whose all derivatives at $z=0$ are zero, we obtain
	\begin{equation}\beta_{\Gamma}=0,\end{equation}
	which indicates that our argument does not depend on the product structure of the index space.
\end{Exe}
	\section{Gromov-Hausdorff discontinuity of $E \mapsto \beta_E$} \label{sec:discontinuity}
	We recall that it is possible to endow the set $\mathcal{M}$ of all isometry classes of compact metric spaces with the Gromow-Hausdorff distance $d_{GH}$.
	
	Given two closed sets $A,B$ in a metric space $(E,d_E)$, the \emph{Hausdorff distance} between $A$ and $B$ is
	
	\begin{equation} d_{\mathcal{H}} (A,B) := \max \{ \sup_{x\in A} d_E(x,B) , \sup_{y \in B} d_E(y,A) \}.\end{equation}

	We now give the definition of the \emph{Gromov-Hausdorff distance} between two isometry classes of compact metric spaces $\bar{E}$ and $\bar{F}$,
	\begin{equation} d_{\mathcal{GH}} (\bar{E}, \bar{F}):= \inf_{i,j} d_\mathcal{H} (i(E),j(F)), \end{equation}
	where  $E$ and $F$ are any two representatives of $\bar{E}$ and $\bar{F}$, $i$ and $j$ run through all isometrics embeddings of $E$ and $F$  into any ambient metric space $(X,d)$, and $d_{\mathcal{H}}$ denotes the Hausdorff distance on closed sets of $(X,d)$.
	
	It is known that $\left(\mathcal{M},d_{\mathcal{GH}}\right) $ is a metric space (see \cite{bbi}).
	\begin{Thm} \label{Thm:Gromovdiscontinuity} The map \begin{alignat*}{1}\left(\mathcal{M},d_{\mathcal{GH}}\right) & \rightarrow \mathbb{R}^+ \\ E & \mapsto \beta_E\end{alignat*} is not continuous at $E=\mathbb{S}^1$. 
	\end{Thm}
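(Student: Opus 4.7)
The plan is to exhibit a sequence of compact metric spaces which converges to $\mathbb{S}^1$ in the Gromov-Hausdorff sense but whose fractional indices do not converge to $\beta_{\mathbb{S}^1}=1$. The natural candidate is the sequence of closed cylinders $E_n := \mathbb{S}^1 \times [0,1/n]$ endowed with the product Riemannian distance. Since the introduction (and Remark \ref{Rem:submanifold}-style arguments) recalls $\beta_{\mathbb{S}^1}=1$, it is enough to establish that $d_{\mathcal{GH}}(E_n,\mathbb{S}^1) \to 0$ while $\beta_{E_n}=0$ for every $n$.

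First I would handle the Gromov-Hausdorff convergence, which is the easy part. Each $E_n$ is compact, so it belongs to $\mathcal{M}$. Embedding $\mathbb{S}^1$ isometrically into $E_n$ as the bottom circle $\theta \mapsto (\theta,0)$, and using the identity as the ambient embedding of $E_n$ into itself, every point $(\theta,z)\in E_n$ is at distance $z\leq 1/n$ from $(\theta,0)$, while every point of the bottom circle lies in $E_n$. Hence the Hausdorff distance in $E_n$ between these images is at most $1/n$, which gives
\begin{equation*}
d_{\mathcal{GH}}(E_n,\mathbb{S}^1) \leq \frac{1}{n} \underset{n\to\infty}{\longrightarrow} 0.
\end{equation*}

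Next I would show that $\beta_{E_n}=0$. Fix $n$ and set $\varepsilon=1/n$. The open cylinder $F := \mathbb{S}^1 \times ]0,\varepsilon[$ is a subset of $E_n$. By the same helix argument used in Remark \ref{Rem:coincides}, the geodesics of $E_n$ (viewed as a submanifold with boundary of $\mathbb{S}^1\times\mathbb{R}$) joining two points of $F$ stay in $F$, so the restriction of $d_{E_n}$ to $F$ coincides with the intrinsic cylinder distance $d_{\mathbb{S}^1\times]0,\varepsilon[}$ used in Theorem \ref{Thm:cylinder}. Theorem \ref{Thm:cylinder} then gives $\beta_F=0$, and Remark \ref{Rem:subspace} applied to $F\subset E_n$ yields $\beta_{E_n}\leq \beta_F=0$, hence $\beta_{E_n}=0$.

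Combining the two steps, $E_n \to \mathbb{S}^1$ for $d_{\mathcal{GH}}$ but $\beta_{E_n}=0$ does not tend to $\beta_{\mathbb{S}^1}=1$, which proves the discontinuity at $\mathbb{S}^1$. No part of this argument is truly delicate: the only mild subtlety is the identification of the restricted distance in the step above, and it is settled by the same geodesic-extension observation that underlies Remark \ref{Rem:coincides}; the heavy lifting is entirely concentrated in Theorem \ref{Thm:cylinder}.
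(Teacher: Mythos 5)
Your proof is correct and follows essentially the same approach as the paper: exhibit the closed cylinders $\mathbb{S}^1\times[0,\varepsilon]$ converging to $\mathbb{S}^1$ in Gromov--Hausdorff distance while having zero fractional index by Theorem~\ref{Thm:cylinder}. The only cosmetic differences are that you embed $\mathbb{S}^1$ into $E_n$ itself rather than embedding both into $\mathbb{S}^1\times\mathbb{R}$, and you spell out the reduction from the closed to the open cylinder via Remarks~\ref{Rem:coincides} and~\ref{Rem:subspace}, which the paper only sketches.
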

	\begin{proof} Let us consider $\mathbb{S}^1 \times [0,\varepsilon]$ endowed with the Riemannian product metric \eqref{eq:cylinder_metric}, which is nothing more than $\mathbb{S}^1\times [0,\varepsilon]$ endowed with the restriction of $d_{\mathbb{S}^1\times\mathbb{R}}$. (All that we say about $\mathbb{S}^1 \times ]0,\varepsilon [$ in Remark \ref{Rem:coincides} is true for $\mathbb{S}^1 \times [0,\varepsilon]$.)		
		
		It is clear that the isometry class of  $\mathbb{S}^1 \times [0,\varepsilon]$ converges towards the isometry class of $\mathbb{S}^1$ regarding the Gromov-Hausdorff distance. Indeed if we denote by $C_\varepsilon=i(\mathbb{S}^1 \times [0,\varepsilon])$ the canonical embedding of $\mathbb{S}^1\times [0,\varepsilon]$ in $\left(\mathbb{S}^1 \times \mathbb{R},d\right),$
		\begin{alignat*}{1}
			d_{\mathcal{GH}} \left( \mathbb{S}^1, \mathbb{S}^1 \times [0,\varepsilon]\right) & \leq d_{\mathcal{H}}\left( C_0, C_\varepsilon \right)\\
			&  = \max \left(\sup_{y\in C_\varepsilon} d(x,C_0), \sup_{y \in C_0} d(y,C_\varepsilon) \right) \\
			& =\max(\varepsilon, 0)=\varepsilon. 
		\end{alignat*}
		Recall that $\beta_{\mathbb{S}^1}=1$. From Theorem \ref{Thm:cylinder} we know that for every $\varepsilon >0$, $\beta_{\mathbb{S}^1 \times [0,\varepsilon]}=0$. The discontinuity at $E=\mathbb{S}^1$ is proven.
	\end{proof}
	
		\section*{Acknowledgements} The author is indebted to Serge Cohen who carefully advised him during this work. He is also grateful to Benoit Huou for fruitful discussions.
	\clearpage
\bibliographystyle{plain}
\bibliography{Nil}
\end{document}